\theoremstyle{definition}
\theoremstyle{remark}
\numberwithin{equation}{section}
\newtheorem{tm}{Theorem}[section]
\newtheorem{rk}{Remark}[section]
\newtheorem{ap}{Assumption}[section]
\newtheorem{lm}{Lemma}[section]
\newtheorem{cor}{Corollary}[section]
\newtheorem{ex}{Example}[section]
\newcommand{\ee}{\mathbb E}
\newcommand{\pp}{\mathbb P}
\newcommand{\nn}{\mathbb N}
\newcommand{\rr}{\mathbb R}
\newcommand{\br}{\mathbf r}
\newcommand{\bs}{\mathbf s} 
\newcommand{\Bi}{\mathbf i}
\newcommand{\BB}{\mathcal B}
\newcommand{\CC}{\mathcal C}
\newcommand{\LL}{\mathcal L}
\newcommand{\OOO}{\mathscr O}
\newcommand{\FFF}{\mathscr F}
\newcommand{\<}{\langle}
\renewcommand{\>}{\rangle}
\allowdisplaybreaks \allowdisplaybreaks[4]
\begin{document}

\articletype{}

\title{Harnack Inequalities and Ergodicity of Stochastic Reaction-Diffusion Equation in $L^p$}

\author{
\name{Zhihui Liu \thanks{Email: liuzh3@sustech.edu.cn} 
}
\affil{Department of Mathematics \& National Center for Applied Mathematics Shenzhen (NCAMS) \& Shenzhen International Center for Mathematics, Southern University of Science and Technology, Shenzhen 518055, China}
}

\maketitle

\begin{abstract}
We derive Harnack inequalities for a stochastic reaction-diffusion equation with dissipative drift driven by additive irregular noise in the $L^p$-space for any $p \ge 2$. These inequalities are utilized to investigate the ergodicity of the corresponding Markov semigroup $(P_t)$. The main ingredient of our method is a coupling by the change of measure. Applying our results to the stochastic reaction-diffusion equation with a super-linear growth drift having a negative leading coefficient, perturbed by a Lipschitz term, indicates that $(P_t)$ possesses a unique and thus ergodic invariant measure in $L^p$ for all $p \ge 2$, which is independent of the Lipschitz term. 
\end{abstract}

\amscodename{: Primary 60H15; 37H05.}

\begin{keywords}
Harnack inequality; 
invariant measure and ergodicity; 
stochastic reaction-diffusion equation; 
stochastic Allen--Cahn equation
\end{keywords}

\section{Introduction}

We consider the following stochastic reaction-diffusion equation under the homogeneous Dirichlet boundary condition on the bounded, open subset $\OOO$ of $\rr^d$:
\begin{align}\label{rd}
\frac{\partial X_t(\xi)}{\partial t}
=\Delta X_t(\xi) +f(X_t(\xi)) + G \frac{\partial W_t(\xi)}{\partial t},  
\quad (t, \xi) \in \rr_+ \times \OOO.
\end{align}  
Here $f$ has polynomial growth and satisfies a dissipativity condition, $(W_t)_{t \ge 0}$ is a cylindrical Wiener process, and $G$ is a densely defined closed linear operator which could be unbounded (see Section \ref{sec2} for more details).

The stochastic reaction-diffusion equation \eqref{rd} has numerous applications in material sciences and chemical kinetics \cite{ET89}. 
When $f(\xi)=\xi-\xi^3$, $\xi \in \rr$, Eq. \eqref{rd} is also called the stochastic Allen--Cahn equation or the stochastic Ginzburg--Landau equation.
It is widely used in many fields, for example, the random interface models and stochastic mean curvature flow \cite{Fun16}.
The existence of the invariant measure and ergodicity and their numerical correspondences for Eq. \eqref{rd} have been investigated in Hilbert settings, see, e.g., \cite{BS20, Hai02(PTRF), Kaw05(PA), LQ20(IMA), LQ21(SPDE), Liu22, LL24, Liu25, LS25} and references therein.

In contrast to SPDEs in Hilbert spaces, only a few papers treat the invariant measures and ergodicity for SPDEs, even with Lipschitz coefficients in Banach spaces.
The authors in \cite{BLS10(JEE)} studied invariant measures for SPDEs in martingale-type (M-type) 2 Banach spaces under Lipschitz and dissipativity conditions driven by regular noise.
For white-noise driven stochastic heat equation (Eq. \eqref{see} with Lipschitz coefficients), \cite{BR16(DCDS)} showed the uniqueness of the invariant measure, if it exists, on $L^p(0, 1)$ with $p>4$; the case for $p \in (2,4]$ remains unknown.
Recently, their method was extended in \cite{BK18(FS)} to an SPDE, arising in stochastic finance, in a weighted $ L^p$ space. 
See also \cite{Nee01(OTAA)} for the uniqueness of the invariant measure, if it exists, of the Ornstein--Uhlenbeck process (with form \eqref{df-wa}) using a pure analytical method. 

For SPDEs with non-Lipschitz coefficients, \cite{BG99(SPA)} obtained the existence of an invariant measure for Eq. \eqref{rd} in the space of continuous functions under the martingale solution framework; the uniqueness of the invariant measure was derived in \cite{Cer03(PTRF), Cer05(PTRF)} by taking advantage of the fact that a polynomial is uniformly continuous on bounded subsets of continuous functions. 
Recently, \cite{KN13(SPDE)} showed the existence of a unique invariant measure on the space of continuous complex functions for the stochastic complex Ginzburg--Landau (Eq. \eqref{rd} with $f(u)=- \Bi |u|^2 u$, where $\Bi=\sqrt{-1}$), relying on some reasonable estimates of the solution in the Hilbert--Sobolev spaces $\dot H^\beta$ with $\beta>d/2$, so that the noise is spatially regular enough.

To show the uniqueness of the invariant measure, these authors mainly formulated a Bismut--Elworthy--Li formula for the derivative of its Markov semigroup to get a gradient estimate, which shows the strong Feller property.
Then, the uniqueness of the invariant measure follows immediately from Khas'minskii--Doob theorem, provided an irreducibility condition holds.
The difficulties for the study of the uniqueness of an invariant measure for SPDEs in Banach settings arise mainly because the tools frequently used in the Hilbert space framework cannot be extended in a straightforward way to Banach space settings \cite{BR16(DCDS)}.

In the past decade, Wang-type dimension-free inequalities have been a new and efficient tool to study diffusion semigroups. 
They were first introduced in \cite{Wan97(PTRF)} for elliptic diffusion semigroups on non-compact Riemannian manifolds and in \cite{Wan10(JMPA)} for heat semigroups on manifolds with boundary.
Roughly speaking, such inequality for a Markov semigroup $(P_t)$ in a Banach space $E$ is formulated as  
\begin{align} \label{har}
\Phi(P_t \phi(x)) & \le P_t (\Phi(\phi)(y)) \exp \Psi(t, x, y),
\quad t>0, \ x, y \in E, \ \phi \in \BB^+_b(E),
\end{align}
where $\Phi: [0, \infty) \rightarrow [0, \infty)$ is convex, $\Psi$ is nonnegative on $[0, \infty) \times E \times E$ with $\Psi(t, x, x)=0$ for all $t>0$ and $x \in E$, and $\BB^+_b(E)$ denotes the family of all Borel measurable and bounded, nonnegative functions on $E$.

There are two frequently used choices of $\Phi$.
One is given by a power function $\Phi(\xi)=\xi^\bs$, $\xi \ge 0$, for some $\bs>1$, where \eqref{har} reduces to 
\begin{align} \label{har-1}
(P_t \phi(x))^\bs & \le P_t \phi^\bs(y)  \exp \Psi(t, x, y),
\quad t>0, \ x, y \in E, \ \phi \in \BB^+_b(E).
\end{align}
Another is given by $\Phi(\xi)=e^\xi$, $\xi \in \rr$, in which one may use $\log \phi$ to replace $\phi$, so that \eqref{har} becomes 
\begin{align} \label{har-2}
P_t \log \phi(x) & \le \log P_t \phi(y)+\Psi(t, x, y),
\quad t>0, \ x, y \in E, \ \phi \in \BB^+_b(E).
\end{align}

The above two inequalities \eqref{har-1}-\eqref{har-2} are called the power-Harnack inequality and the log-Harnack inequality, respectively.
Both inequalities have been investigated extensively and applied to SODEs and SPDEs via coupling by the change of measure, see, e.g., \cite{Kaw05(PA), Wan07(AOP), WZ13(JMPA), WZ14(SPA)}, the monograph \cite{Wan13}, and references therein. 
Besides the gradient estimate, which yields the strong Feller property, these Harnack inequalities also have a lot of other applications.
For example, they are used to study the contractivity of the Markov semigroup in \cite{Wan11(AOP), Wan17(JFA)} and to derive almost surely (a.s.) strictly positivity of the solution for an SPDE in \cite{Wan18(PTRF)}.

For SPDEs with polynomial growth drift driven by irregular noise in Hilbert settings, we are only aware of \cite{Kaw05(PA), Xie19(JDE)} investigating Harnack inequalities in a weighted $L^2$-space and the nonnegative subset of $L^2$, respectively.
When the noise is of trace-class, i.e., $G$ appearing in Eq. \eqref{rd} is a Hilbert--Schmidt operator, then the variational solution theory can be used and multiplicative noise can also be considered, as the solution is a semi-martingale so that It\^o formula can be applied; see, e.g., \cite{HLL20(SPL), Liu09(JEE)}. 
On the contrary, the variational solution would not exist in the irregular noise case; one needs to adopt the mild solution theory instead, where It\^o formula is unavailable.  
We also note that to derive Harnack inequalities for white noise-driven SPDE, \cite{WZ14(SPA)} used finite-dimensional approximations to get a sequence of SODEs such that the arguments developed in \cite{Wan11(AOP)} can be applied.

The previous questions motivate the study investigating Harnack inequalities and ergodicity of Eq. \eqref{rd} in the Banach space $L^p:=L^p(\OOO)$.
To derive the above two types of Harnack inequalities for Eq. \eqref{rd} on $(L^p)_{p \ge 2}$-spaces, our main idea is the construction of a coupling by the change of measure and a uniform pathwise estimate for this coupling.
As by-products, a gradient estimate and the uniqueness of the invariant measure, if it exists, follow immediately.
To our knowledge, the obtained Harnack inequalities (see \eqref{har-log}-\eqref{har-pow}) are the first two Harnack inequalities for SPDEs in Banach settings.

On the other hand, to show the existence of an invariant measure for Eq. \eqref{rd} with super-linear growth but without strong dissipativity drift, we establish the tightness of the sequence of empirical measures $(\mu_n)$ (defined in \eqref{mun}) via a compact embedding to $L^p$ through the Sobolev--Slobodeckii space $(W^{\beta, p}, \|\cdot\|_{\beta, p})$. 
This is acheived by deriving a uniform estimate of $\mu_n(\|\cdot\|_{\beta, p})$, where a uniform estimate of $\mu_n(\|\cdot\|_{q+p-2}^{q+p-2})$ plays a key role.  
In combination with the uniqueness result, we obtain the existence of a unique and thus ergodic invariant measure for Eq. \eqref{rd} (in Theorem \ref{main2}).

The rest of the paper is organized as follows.  
Some preliminaries, assumptions, and main results are given in the next section. 
We derive a uniform pathwise estimate to get the existence of a unique global solution to Eq. \eqref{rd} in the first part of Section \ref{sec3}.
In another part of Section \ref{sec3}, we construct the coupling and derive a uniform pathwise estimation for this coupling.
These estimations ensure the well-posedness of the coupling and will be used in the last section to derive Harnack inequalities and to prove the main results in Section \ref{sec2}.

\section{Preliminaries and Main Results}
\label{sec2}

Let $\OOO \subset \rr^d$, $d \ge 1$, be an open, bounded Lipschitz domain. 
Throughout, $p \ge 2$ is a fixed constant.
Denote by $(L^p=L^p(\OOO), \|\cdot\|_p)$ and $H_0^1=H_0^1(\OOO)$ the usual Lebesgue and Sobolev spaces on $\OOO$, respectively. Let $\BB_b(L^p)$ be the class of bounded Borel measurable functions on $L^p$, and $\BB^+_b(L^p)$ and $\CC_b(L^p)$ the nonnegative and continuous subsets in $\BB_b(L^p)$, respectively.  
In particular, when $p=2$, $L^2$ is a Hilbert space with the norm $\|\cdot\|:=\|\cdot\|_2$ and the inner product $(\cdot, \cdot)$.

Let $(W_t)_{t \ge 0}$ be a $U:=L^2$-valued cylindrical Wiener process concerning a complete filtered probability space $(\Omega, \FFF, (\FFF_t)_{t \ge 0},\pp)$ satisfying the usual condition, 
i.e., there exists an orthonormal basis $(e_k)_{k=1}^\infty$ of $L^2$ and a family of independent standard real-valued Brownian motions $(\beta_k)_{k=1}^\infty $ such that  
\begin{align} \label{df-W}
W_t=\sum_{k=1}^\infty e_k\beta_k(t),\quad t \ge 0.
\end{align}

Denote by $A$ the Dirichlet Laplacian operator on $L^p$.
It is well-known that $A$ generates an analytic $\CC_0$-semigroup in $L^p$, denoted by $(S^p_t)_{t \ge 0}$, for each $p \ge 2$. 
These semigroups are consistent, in the sense that $S^{p_1}_t x=S^{p_2}_t x$, for all $t \ge 0$, $x \in L^{p_1} \cap L^{p_2}$, and $p_1, p_2 \ge 2$.
Then we shall denote all $(S^p_t)_{t \ge 0}$ by $(S_t)_{t \ge 0}$ for all $p \ge 2$, if there is no confusion.  
Moreover, $(S_t)_{t \ge 0}$ satisfies the following ultracontractivity (see, e.g., \cite[Section 2.1]{Cer03(PTRF)}):
\begin{align} \label{ult}
\|S_t u\|_{\beta, \br} \le C e^{-\lambda_1 t} t^{-(\frac\beta2+\frac{d(\br-\bs)}{2\br \bs})} \|u\|_{\bs}, 
\quad t>0, \ u \in L^\bs,
\end{align} 
for all $\beta \in (0,1)$ and $1 \le \bs \le \br \le \infty$, where $\lambda_1>0$ is the first eigenvalue of $-A$.
For convenience, here and what follows, we frequently use the generic constant $C$, which may differ in each appearance. 
When $p=2$, the following Poincar\'e inequality holds:
\begin{align} \label{poin}
\|\nabla u\|_2 \ge \lambda_1 \|u\|_2, \quad u \in H_0^1.
\end{align}  
In Section \ref{sec4}, we also need the Sobolev--Slobodeckij space $W^{\beta,p}$, with $\beta\in (0,1)$, whose norm is defined by
\begin{align} \label{sob-slo}
\|\phi\|_{\beta,p}
:=\bigg(\|\phi\|_p^p 
+\int_{\OOO}\int_{\OOO} 
\frac{|\phi(\xi)-\phi(\eta)|^p}{|\xi-\eta|^{d+\beta p}}
{\rm d}\xi {\rm d}\eta \bigg)^\frac1p.
\end{align} 
It is known that the following compact embedding holds (see, e.g., \cite[Theorem 7.1]{NPV12}):
\begin{align} \label{emb}
W^{\beta, p} \subset L^p, \quad \beta \in (0, d/p).
\end{align}

\subsection{Main assumptions and results}

Let us give the following assumptions on the data of Eq. \eqref{rd}.
We begin with the conditions on the drift function $f$.

\begin{ap} \label{ap-f} 
There exist constants $L_f \in \rr$, $\theta, L'_f>0$, and $q \ge 2$ such that for all $\xi, \eta \in \rr$,
\begin{align}
& (f(\xi)-f(\eta)) (\xi-\eta) \le L_f |\xi-\eta|^2 - \theta |\xi-\eta|^q, \label{f} \\
& |f(\xi)-f(\eta)| \le L'_f (1+|\xi|^{q-2}+|\eta|^{q-2})|\xi-\eta|.  \label{f+}
\end{align} 
\end{ap}

\begin{rk} \label{rk-f}
A motivating example of $f$ such that Assumption \ref{ap-f} holds is a polynomial of odd order $q-1$ with a negative leading coefficient (for the stochastic Allen--Cahn equation, $q= 4$), perturbed with a Lipschitz continuous function; see, e.g., \cite[Example 7.8]{PZ14}.
\end{rk}

Define by $F$ the Nemytskii operator associated with $f$, i.e.,  
\begin{align} \label{df-F}
F(x)(\xi)=f(x(\xi)), \quad x \in L^p, \ \xi \in \OOO.
\end{align} 
Then Eq. \eqref{rd} can be rewritten as the stochastic evolution equation 
\begin{align}\label{see} 
{\rm d}X_t &=(A X_t+F(X_t)) {\rm d}t+ G{\rm d}W_t, \quad t > 0, 
\end{align}
with the initial datum $X_0=x \in L^p$, where $F$ is the Nemytskii operator defined in \eqref{df-F} associated with $f$ and $W$ is a $U$-valued cylindrical Wiener process given in \eqref{df-W}.

\begin{rk} \label{rk-F}
It follows from \eqref{f}-\eqref{f+} that the Nemytskii operator $F$ defined in \eqref{df-F} is a well-defined, continuous operator from $L^q$ to $L^{q'}$, with $q'=q/(q-1)$.
Moreover,   
\begin{align*}
& \<F(u)-F(v), u-v\>
 \le L_f \|u-v\|^2-\theta \|u-v\|_q^q, 
 \quad \forall~u, v\in L^q,
\end{align*}  
where $\<\cdot, \cdot\>$ is the dualization between $L^{q'}$ and $L^q$ with respect to $L^2$.
\end{rk}

To perform the assumption of the noise part, as we consider the Banach spaces $(L^p)_{p \ge 2}$, let us first recall the required materials of stochastic calculus in Banach spaces, especially the M-type 2 spaces and the $\gamma$-radonifying operators.
It is known that the stochastic calculations in Banach spaces depend heavily on the geometric structure of the underlying spaces.

We first recall the definitions of the M-type for a Banach space.
A Banach space $E$ is called of M-type $2$ if there exists a constant $\tau^M \ge 1$ such that  
\begin{align*}
\|f_N\|_{L^2( \Omega'; E)}
\le \tau^M \Big(\|f_0\|_{L^2(\Omega';E)}^2
+\sum_{n=1}^N \|f_n-f_{n-1}\|_{L^2(\Omega';E)}^2 \Big)^{1/2}
\end{align*}
for all $E$-valued square integrable martingales $\{f_n\}_{n=0}^N$. 
For stochastic calculus in Banach spaces, the so-called $\gamma$-radonifying operators play important roles instead of Hilbert--Schmidt operators in Hilbert settings. 
Let $(\gamma_n)_{n \in \nn_+}$ be a sequence of independent 
$\mathcal N(0,1)$-random variables in a probability space $(\Omega',\FFF',\pp')$.
Denote by $\LL(U, E)$ the space of linear operators from $U$ to $E$.
An operator $R\in \LL(U, E)$ is called $\gamma$-radonifying if there exists an orthonormal basis $(h_n)_{n\in \nn_+}$ of $U$ such that the Gaussian series $\sum_{n\in \nn_+}\gamma_nRh_n$ converges in $L^2(\Omega';E)$.
In this situation, it is known that the number 
\begin{align*}
\|R\|_{\gamma(U, E)}:=\Big\|\sum_{n\in \nn_+}\gamma_nRh_n\Big\|_{L^2(\Omega';E)}
\end{align*}
does not depend on the sequence $(\gamma_n)_{n \in \nn_+}$ and the basis $(h_n)_{n\in \nn_+}$, and
it defines a norm on the space $\gamma(U, E)$ of all $\gamma$-radonifying operators from $U$ to $E$.
In particular, if $E$ reduces to a Hilbert space, then $\gamma(U, E)$ coincides with the space of all Hilbert--Schmidt operators from $U$ to $E$.

Let $T>0$ and $(E, \|\cdot\|_E)$ be an M-type 2 space.
For any $\gamma(U, E)$-valued adapted process $\Phi \in L^\bs(\Omega; L^2(0,T;\gamma(U, E)))$ with $\bs \ge 2$, the following one-sided Burkholder inequality for the $E$-valued stochastic integral $\int_{0}^t \Phi_r{\rm d}W_r$ holds for some constant $C=C(\bs)$ (see \cite[Theorem 2.4]{Brz97(SSR)} or \cite[(3)]{HHL19(JDE)}):
\begin{align}\label{bdg}
\ee \sup_{t\in [0,T]} \Big\|\int_{0}^t \Phi_r{\rm d}W_r\Big\|_E^\bs 
\le C \ee \Big(\int_0^T \|\Phi\|^2_{\gamma(U, E)} {\rm d} t\Big)^{\bs/2}.
\end{align}
Returning to our case, it is known that $(L^p)_{p \ge 2}$ are M-type 2 spaces. 
For more details about definitions and properties of M-type 2 spaces and 
$\gamma$-radonifying operators, we refer to \cite{Brz97(SSR)}.

With these preliminaries, we can now give our assumptions on $G$ appearing in Eq. \eqref{see}.
Let $\mathcal L_S(U)$ be the set of all densely defined closed linear operators $(L, {\rm Dom}(L))$ on $U$ such that for every $t>0$, $S_t L$ extends to a unique operator in $\gamma(U, L^p)$, which is again denoted by $S_t L$.
Assume that $G \in L_S(U)$ and $\int_0^T \|S_t G\|^2_{\gamma(U, L^p)} {\rm d}t<\infty$ for each fixed $T>0$.
Denote by $W_A$ the stochastic convolution, also known as the Ornstein--Uhlenbeck process, associated with Eq. \eqref{see}, i.e.,  
\begin{align} \label{df-wa} 
W_A(t)=\int_0^t S_{t-r} G {\rm d}W_r, \quad t \ge 0.
\end{align}
It is clear that $W_A$ is the mild solution of the linear equation
\begin{align} \label{eq-wa} 
{\rm d}Z_t &=A Z_t {\rm d}t+G {\rm d}W_t,
\quad Z_0=0,
\end{align}
i.e., Eq. \eqref{see} with $F=0$, with vanishing initial datum.
It follows from the Burkholder inequality \eqref{bdg} that 
\begin{align*}
\ee \|W_A(T)\|_p^2
\le C \int_0^T \|S_t G\|^2_{\gamma(U, L^p)} {\rm d}t <\infty,
\end{align*}
which shows that $W_A(T)$ possesses the bounded second moment in $L^p$. 
Moreover, we perform the following stronger assumption: we shall handle the polynomial drift function $f$ satisfying Assumption \ref{ap-f} where $q \ge 2$ was introduced.

\begin{ap} \label{ap-wa}
$W_A$ has a continuous version in $L^{q+p-2}$ such that 
\begin{align} \label{wa1}
\sup_{t \ge 0} \ee \|W_A(t)\|_{q+p-2}^{q+p-2} <\infty.
\end{align} 
\end{ap} 

The above Assumption \ref{ap-wa} on the Ornstein--Uhlenbeck process $W_A$ defined in \eqref{df-wa} is necessary for the study of the well-posedness, Harnack inequalities, and ergodicity in Sections \ref{sec3} and \ref{sec4}.
To indicate that Assumption \ref{ap-wa} is natural, we remark that the condition \eqref{wa1} is valid in various applications, even when $G$ is an unbounded operator.

\begin{ex} \label{ex-wa}
Consider 1D Eq. \eqref{eq-wa} with $\OOO=(0,1)$, $U=L^2(0,1)$ (with a uniformly bounded orthonormal basis $(e_k=\sqrt2 \sin (k\pi x))_{k \in \nn_+}$), and $G: =(-A)^{\theta/2}$ for some $\theta<1/2$.
In particular, $\theta=0$ corresponds to white noise and $G$ is an unbounded operator when $\theta \in (0, 1/2)$.

It is clear that $G$ is a densely defined closed linear operator on $U$.
To show $G \in L_S(U)$, we note that for $\br \ge 2$, $L^\br$ is a Banach function space with finite cotype, so an operator $\Phi\in \gamma(U, L^\br)$ if and only if $(\sum_{k=1}^\infty (\Phi e_k)^2)^{1/2}$ belongs to $L^\br$ and there exists a constant  $C>0$ such that (see \cite[Lemma 2.1]{NVW08(JFA)})
\begin{align} \label{gamma}
\frac1C \Big\|\sum_{k=1}^\infty (\Phi e_k)^2\Big\|_{\br/2}
\le \|\Phi\|^2_{\gamma(U, L^\br)}
&\le C \Big\|\sum_{k=1}^\infty (\Phi e_k)^2\Big\|_{\br/2},
\quad \Phi\in \gamma(U, L^\br).
\end{align} 
It follows from the above estimates with $\Phi=S_t G$ for $t>0$ that 
\begin{align*} 
\|S_t G\|^2_{\gamma(U, L^\br)}
\le C \Big\|\sum_{k=1}^\infty (S_t G e_k)^2\Big\|_{\br/2} 
\le C \sum_{k=1}^\infty e^{-2 \lambda_k t} \lambda_k^\theta,
\quad \text{with}~ \lambda_k=\pi^2 k^2, ~ k \in \nn_+,
\end{align*} 
which is convergent if and only if $\theta<1/2$.
This shows $S_t G \in \gamma(U, L^p)$ for every $t>0$ and thus $G \in L_S(U)$.

Finally, one can use the Burkholder inequality \eqref{bdg} and the above equivalence relation \eqref{gamma} to show that 
$W_A$ belongs to $\CC([0, \infty); L^{\bs}(\Omega; L^\br))$ for any $\bs, \br \ge 2$, following an argument used in \cite[(2.17) in Lemma 2.2]{LQ20(IMA)}.
Indeed, for any $t \ge 0$,  
\begin{align*} 
\sup_{t \ge 0} \ee \|W_A(t)\|_{\br}^{\bs}
& \le C(\bs) \sup_{t \ge 0} \Big(\int_0^t \|S_r (-A)^\frac\theta2\|^2_{\gamma(L^2, L^\br)} {\rm d}r\Big)^{\bs/2} \\
& \le C(\bs) \Big(\int_0^\infty \Big\|\sum_{k=1}^\infty e^{-2 \lambda_k r} \lambda_k^\theta e_k^2\Big\|_{\br/2} {\rm d}t\Big)^{\bs/2} \\
& \le  C(\bs) \Big(\sum_{k=1}^\infty \lambda_k^\theta \|e_k\|^2_\br 
\Big(\int_0^\infty e^{-2 \lambda_k r} {\rm d}t \Big) \Big)^{\bs/2} \\
& \le  C(\bs) \Big(\sum_{k=1}^\infty \frac1{2 \lambda_k^{1-\theta}} \Big)^{\bs/2}  <\infty.
\end{align*}  
This shows \eqref{wa1} with $\bs=\br=q+p-2$. 
\end{ex}

In the derivation of the existence of an invariant measure in Theorem \ref{tm-erg} when $q>2$, we need the following Sobolev regularity of the Ornstein--Uhlenbeck process $(W_A(t))_{t \ge 0}$.

\begin{ap} \label{ap-wa+}
There exists a constant $\beta_0 \in (0, 1)$ such that  
\begin{align} \label{wa+}
\sup_{t \ge 0} \ee \|W_A(t)\|_{\beta_0, p}<\infty. 
\end{align}  
\end{ap}

\begin{ex} \label{ex-wa+}
As in Example \ref{ex-wa},  
\begin{align*} 
\sup_{t \ge 0}\ee \|W_A(t)\|_{\beta_0, \br}^{\bs}
&\le C(\bs) \Big(\int_0^\infty \|(-A)^{\frac{\beta_0}2} S_r G\|^2_{\gamma(L^2, L^\br)} {\rm d}r\Big)^{\bs/2} \\
& \le  C(\bs) \Big(\sum_{k=1}^\infty \frac1{\lambda^{1-(\beta_0+\theta)}_k} \Big)^{\bs/2},
\end{align*} 
which is convergent if and only if $\beta_0+\theta<1/2$.
As $\theta<1/2$ in Example \ref{ex-wa}, this shows \eqref{wa+} with $\br=p$ and $\bs=1$ for any $\beta_0 \in (0, 1/2-\theta)$.
\end{ex}

We need the following standard elliptic condition to derive Wang-type Harnack inequalities and the ergodicity for $(P_t)$ in Section \ref{sec4}.

\begin{ap}\label{ap-ell}
$G G^*$ is invertible on $L^p$, with inverse $(G G^*)^{-1}$, such that $G^{-1}:=G^* (G G^*)^{-1}: L^p \to U$ is a bounded linear operator:
\begin{align} \label{G-1}
\|G^{-1}\|_\infty:=\sup_{x \in U:~ x \neq 0} \frac{\| G^{-1} x\|}{\|x\|_p}<\infty.
\end{align}
\end{ap}

\begin{rk}  \label{rk-G-1}
Let $\theta \in [0, 1/2)$.
Then the operator $G: =(-A)^{\theta/2}$ defined in Example \ref{ex-wa} is invertible with bounded inverse $G^{-1}: =(-A)^{-\theta/2}$. 
\end{rk}

\subsection{Main results}

Now, we are in a position to present our main results.
Let us first recall some definitions.

Let $x \in L^p$ and denote by $(X_t^x)_{t \ge 0}$ the mild solution of Eq. \eqref{rd} with the initial datum $X_0=x$ (see Lemma \ref{lm-lq} for the well-posedness). 
Then $(X^x_t)_{t \ge 0}$ is a Markov process that generates a Markov semigroup $(P_t)$ defined as 
\begin{align*}
P_t \phi(x):=\ee \phi(X^x_t), \quad 
t \ge 0, \ x \in L^p, \ \phi \in \BB_b(L^p).
\end{align*} 
The Markov semigroup $(P_t)$ is called of strong Feller if $P_t \phi \in \CC_b(L^p)$ for any $t>0$ and $\phi \in \BB_b(L^p)$.
A probability measure $\mu$ on $L^p$ is said to be an invariant measure of
of $(P_t)$ (or of Eq. \eqref{see}), if 
\begin{align*}
\int_{L^p} P_t \phi(x) \mu({\rm d}x)=\mu(\phi):=\int_{L^p} \phi(x) \mu({\rm d}x),
\quad \phi \in \BB_b(L^p), \ t \ge 0.
\end{align*} 
An invariant measure $\mu$ of $(P_t)$ is called \emph{ergodic}, if 
\begin{align} \label{df-erg}
\lim_{T \to \infty} \frac1T \int_0^T P_t \phi(x) {\rm d}t=\mu(\phi) 
\quad \text{in}~ L^2(L^p; \mu),\quad \forall~ \phi \in L^2(L^p; \mu).
\end{align}
It is well-known that if $(P_t)$ admits a unique invariant measure, it is  (uniquely) ergodic.

Our first main result is the following log-Harnack inequality and power-Harnack inequality, from which the uniqueness of the invariant measure, if it exists, for $(P_t)$ follows. 
Here we use $\chi$ to denote the indicator function.

\begin{tm} \label{main1}
Let Assumptions \ref{ap-f}, \ref{ap-wa}, and \ref{ap-ell} hold.
Define

\begin{align} \label{lam}
\lambda:= -L_f + \theta \chi_{q=2, p \neq 2}
+ \lambda_1 \chi_{q \neq 2, p=2}
+(\lambda_1+\theta) \chi_{q=p=2}.
\end{align} 
For any $T > 0$, $\bs >1$, $x, y \in L^p$, and $\phi \in \BB^+_b(L^p)$,
\begin{align}
& P_T \log \phi(y) \le \log P_T \phi(x)
+\frac{\lambda \|G^{-1}\|_\infty^2 \|x-y\|_p^2}{e^{2 \lambda T}-1}, \label{har-log} \\
& (P_T \phi(y))^\bs \le P_T \phi^\bs(x)
\exp\Big(\frac{\bs \lambda \|G^{-1}\|_\infty^2 \|x-y\|_p^2}{(\bs-1) (e^{2 \lambda T}-1)}\Big).  \label{har-pow}
\end{align}
Consequently, $(P_t)$ admits at most one invariant measure.
\end{tm}

Another main result is the existence of an invariant measure for $(P_t)$.
Combined with the uniqueness result in Theorem \ref{main1}, $(P_t)$ possesses exactly one ergodic invariant measure.

\begin{tm} \label{main2}
Let Assumptions \ref{ap-f}, \ref{ap-wa}, and \ref{ap-wa+} hold. 
Assume that $q>2$ such that 
\begin{align} \label{d}
d<\frac{2p(q+p-2)}{(p-1)(q-2)}.
\end{align}
Then $(P_t)$ has an invariant measure $\mu$ with full support on $L^p$ such that $\mu( \|\cdot\|_{q+p-2}^{q+p-2})<\infty$.
Assume furthermore that Assumption \ref{ap-ell} holds, then $\mu$ is the unique ergodic invariant measure of $(P_t)$.  
\end{tm}

\section{Coupling and Moments' Estimations}
\label{sec3}

The main aims of this section are to show the existence of a unique global solution to Eq. \eqref{rd} and to construct a well-defined coupling process for this solution process.
We also derive several uniform a priori estimates on moments of these two processes, which will be used in Section \ref{sec4} to derive Wang-type Harnack inequalities and the ergodicity of $(P_t)$.

\subsection{Well-posedness and moments' estimations}

Let us first recall that an $L^p$-valued process $(X_t)$ is a mild solution of Eq. \eqref{see} with the initial datum $X_0=x$ if $\pp$-a.s.
\begin{align}\label{mild} 
X_t &=S_t x+ \int_0^t S_{t-r} F(X_r) {\rm d}r+W_A(t),
\quad t \ge 0.
\end{align}
From Remark \ref{rk-F}, the deterministic convolution in Eq. \eqref{mild} makes sense.
Define $Z=X-W_A$.
It is clear that $X$ is a mild solution of Eq. \eqref{see} if and only if $Z$ is a mild solution of the random PDE 
\begin{align} \label{Z} 
\partial_t Z_t =\Delta Z_t+ F(Z_t+W_A(t)),
\quad Z_0=x.
\end{align}

The following results show the existence of a unique mild solution of Eq. \eqref{see}, which is a Markov process and depends on the initial data continuously in a pathwise sense, where the constant $\lambda$ is defined in \eqref{lam}.

\begin{lm} \label{lm-lq}
Let $T>0$, $x \in L^p$, and Assumptions \ref{ap-f}-\ref{ap-wa} hold.
Eq. \eqref{rd} with an initial datum in $L^p$ possesses a unique mild solution, in $\CC([0,T]; L^p) \cap L^{q+p-2}(0, T; L^{q+p-2})$ $\pp$-a.s., which is a Markov process.
Moreover, for all $t \ge 0$ and $x, y \in L^p$,        
\begin{align} \label{con}
& \|X_t^x-X_t^y\|_p \le e^{- \lambda t}  \|x-y\|_p, ~\pp\text{-a.s.} 
\end{align}  
\end{lm}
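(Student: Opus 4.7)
The plan is to reduce Eq.~\eqref{rd} to a pathwise (random) PDE for $Z_t = X_t - W_A(t)$ as in \eqref{Z}, and then work $\omega$-by-$\omega$, exploiting the continuity of $W_A$ in $L^{q+p-2}$ guaranteed by Assumption~\ref{ap-wa}. For each $\omega$, the forcing $W_A(\cdot,\omega)$ is a fixed continuous $L^{q+p-2}$-valued path, so \eqref{Z} becomes a deterministic semilinear parabolic equation. For local-in-time existence I would truncate $f$ to a globally Lipschitz family $f_n$ (say $f_n(\xi)=f(\xi)\chi_{|\xi|\le n}$ plus a linear extension) and solve the corresponding mild equation for $Z^n$ by a Banach fixed-point argument in $\CC([0,T_n];L^p)$, using the analyticity of $(S_t)_{t\ge 0}$ together with the Lipschitz bound of $F_n$ on $L^p$.

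The heart of the matter is then a uniform a priori estimate that prevents blow-up and allows passage to the limit $n\to\infty$. Setting $u=Z^x-Z^y = X^x-X^y$ (the $W_A$ cancels), $u$ satisfies the deterministic equation
\begin{equation*}
\partial_t u = \Delta u + F(X^x) - F(X^y),\qquad u_0=x-y.
\end{equation*}
After approximating $u$ by a smoother regularization (e.g.\ via a Yosida/resolvent approximation of $A$, or a Galerkin scheme based on the Dirichlet eigenfunctions) so that $\|\cdot\|_p^p$ is differentiable along trajectories, I would compute
\begin{equation*}
\frac1p\frac{d}{dt}\|u\|_p^p
 = -(p-1)\int_{\OOO}|u|^{p-2}|\nabla u|^2\,d\xi
 + \int_{\OOO}|u|^{p-2}u\,(f(X^x)-f(X^y))\,d\xi,
\end{equation*}
where the Laplacian contribution is obtained by integration by parts since $|u|^{p-2}u$ lies in $H_0^1$ along the approximation, and its negativity is crucial. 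Using the pointwise dissipativity \eqref{f} and the identity $u=X^x-X^y$ gives $|u|^{p-2}u(f(X^x)-f(X^y))\le L_f|u|^p-\theta|u|^{q+p-2}$. Depending on the regime this yields, after applying \eqref{poin} when $p=2$ and/or dropping either the Laplacian term or the $L^{q+p-2}$ term when one of them is not useful, the inequality $\frac{d}{dt}\|u\|_p^p\le -p\lambda\|u\|_p^p$ with $\lambda$ as in \eqref{lam}; Gronwall then delivers \eqref{con}. Uniqueness is the special case $x=y$ (after localization), and uniform $L^p$-control of $X^n$ via the same test with $y=0$ (absorbing $f(0)$ and $W_A$ terms using Assumption~\ref{ap-wa} and Young's inequality) furnishes the global bound in $\CC([0,T];L^p)\cap L^{q+p-2}(0,T;L^{q+p-2})$; the $L^{q+p-2}$-part is a by-product of integrating the $-p\theta\|u\|_{q+p-2}^{q+p-2}$ term.

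The Markov property follows in the standard way from pathwise uniqueness and the fact that the increments $W_{t+\cdot}-W_t$ are independent of $\FFF_t$: solving Eq.~\eqref{see} on $[t,t+s]$ with initial datum $X_t^x$ coincides $\pp$-a.s.\ with $X_{t+s}^x$, so $\ee[\phi(X_{t+s}^x)\mid\FFF_t]=P_s\phi(X_t^x)$ for $\phi\in\BB_b(L^p)$.

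The main obstacle is the rigorous justification of the chain rule for $t\mapsto\|u_t\|_p^p$ at the mild-solution level: for $p>2$ the map $u\mapsto \|u\|_p^p$ is $C^1$ on $L^p$ but $u$ is only a mild (not strong) solution and $|u|^{p-2}u$ need not lie in $\mathrm{Dom}(A)$. I would handle this either by a Moreau--Yosida/resolvent regularization $u^\varepsilon = (I-\varepsilon A)^{-1}u$, computing $\frac{d}{dt}\|u^\varepsilon\|_p^p$ from the regularized equation and passing to $\varepsilon\downarrow 0$ using the dominated convergence theorem together with the contractivity of $(I-\varepsilon A)^{-1}$ on $L^p$, or by invoking the Galerkin scheme based on the eigenbasis of $A$; both routes are available here because the Laplacian generates a positivity-preserving, $L^p$-contractive analytic semigroup.
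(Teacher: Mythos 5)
Your proposal is correct and follows essentially the same route as the paper: the decomposition $Z=X-W_A$ turning \eqref{rd} into the pathwise PDE \eqref{Z}, local existence from the local Lipschitz property of $f$, a priori bounds by testing with $p|u|^{p-2}u$ and exploiting the dissipativity \eqref{f} (with the Poincar\'e inequality \eqref{poin} when $p=2$ and the identification $\|u\|_{q+p-2}^{q+p-2}=\|u\|_p^p$ when $q=2$ to recover each case of $\lambda$ in \eqref{lam}), and Gronwall to obtain \eqref{con}, uniqueness, and the $L^{q+p-2}$ integrability. The only difference is that you spell out the justification of the chain rule for $t\mapsto\|u_t\|_p^p$ via Yosida/Galerkin regularization, a step the paper performs formally.
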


\begin{proof}
From \eqref{f+}, $f$ is locally Lipschitz continuous, so it is not difficult to show that both Eq. \eqref{Z} with $Z=X-W_A$ and Eq. \eqref{see} exist local solutions on $[0, T_0)$ for some stopping time $T_0 \in (0, T]$.
Extending this local solution to the whole time interval $[0, T]$ requires uniform a priori estimates for $Z$ and $X$.

As Eq. \eqref{Z} is a pathwise random PDE, we test $p |Z_t|^{p-2} Z_t$ on this equation with $t \in [0, T_0)$ and use the conditions \eqref{f}-\eqref{f+} and Young inequality to obtain
\begin{align*}  
& \partial_t \|Z_t\|_p^p 
+ p(p-1) \int_{\OOO} |Z_t|^{p-2} |\nabla Z_t|^2 {\rm d}\xi  \\
& =  p\< |Z_t|^{p-2}Z_t, F(Z_t+W_A(t))-F(W_A(t))\> 
+p \<|Z_t|^{p-2}Z_t, F(W_A(t)) \> \\
& \le  p L_f \|Z_t\|_p^p   
- p \theta \|Z_t\|_{q+p-2}^{q+p-2} 
+p \<|Z_t|^{p-2}Z_t, F(W_A(t)) \> \nonumber \\
& \le  p L_f \|Z_t\|_p^p  
- p \theta_1 \|Z_t\|_{q+p-2}^{q+p-2} 
+C \|F(W_A(t))\|_\frac{q+p-2}{q-1}^\frac{q+p-2}{q-1}\nonumber \\
& \le  C (1+\|W_A(t)\|_{q+p-2}^{q+p-2})
+p L_f \|Z_t\|_p^p  - p \theta_1 \|Z_t\|_{q+p-2}^{q+p-2},
\end{align*}  
where $\theta_1$ could be chosen as any positive number smaller than $\theta$.
This yields
\begin{align} \label{Zp} 
& \|Z_t\|_p^p 
+ p \theta_1 \int_0^t \|Z_r\|_{q+p-2}^{q+p-2} {\rm d} r 
+ p(p-1) \int_0^t \int_{\OOO} |Z_t|^{p-2} |\nabla Z_t|^2 {\rm d}\xi  {\rm d} r  \\
& \le  \|x\|_p^p+C \int_0^t (1+\|W_A(r)\|_{q+p-2}^{q+p-2}) {\rm d} r 
+p L_f \int_0^t \|Z_r\|_p^p  {\rm d} r. \nonumber
\end{align}
Using Gr\"onwall lemma, we obtain
\begin{align*} 
& \|Z_t\|_p^p 
+ p \theta_1 \int_0^t \|Z_r\|_{q+p-2}^{q+p-2} {\rm d} r 
+ p(p-1) \int_0^t \int_{\OOO} |Z_t|^{p-2} |\nabla Z_t|^2 {\rm d}\xi  {\rm d} r  \\ 
& \le e^{p L_f t} \Big(\|x\|_p^p+C \int_0^t (1+\|W_A(r)\|_{q+p-2}^{q+p-2}) {\rm d} r \Big).
\end{align*}

The above uniform estimate, in combination with the condition \eqref{wa1} of $W_A$ in Assumption \ref{ap-wa}, implies the global existence of a mild solution $Z$ to Eq. \eqref{Z} on $[0, T]$ in $\CC([0,T]; L^p) \cap L^{q+p-2}(0, T; L^{q+p-2})$ $\pp$-a.s.
Considering the relation $X=Z+W_A$ and the condition \eqref{wa1}, we obtain a global mild solution $X$ to Eq. \eqref{see}.

To show the continuous dependence \eqref{con}, let us note that  
\begin{align*} 
\partial_t (X_t^x-X_t^y) &=A (X_t^x-Y_t^y)+F(X_t^x)-F(Y_t^y).
\end{align*}
Testing $p |X_t^x-X_t^y|^{p-2} (X_t^x-X_t^y)$ on the above equation, using integration by parts formula, and applying the condition \eqref{f}, we obtain
\begin{align} \label{con-xy} 
& \|X_t^x-X_t^y\|_p^p 
+p \theta \int_0^t \|X_r^x-X_r^y\|_{q+p-2}^{q+p-2} {\rm d}r \nonumber \\
&\quad + p(p-1) \int_0^t \int_{\OOO} |X_r^x-X_r^y|^{p-2} |\nabla (X_r^x-X_r^y)|^2 {\rm d} \xi {\rm d}r  \nonumber \\
& \le \|x-y\|_p^p + p L_f \int_0^t \|X_r^x-X_r^y\|_p^p {\rm d}r.
\end{align}
We conclude \eqref{con} with $\lambda= -L_f$ by Gr\"onwall lemma.
When $q=2$, then \eqref{con} holds with $\lambda=\theta-L_f$, as one can subtract the first integral on the left-hand side of the above inequality; while $p=2$, then using the Poincar\'e inequality \eqref{poin} yields \eqref{con} with $\lambda=\lambda_1-L_f$.
Similarly, \eqref{con} holds with $\lambda=\lambda_1+\theta-L_f$ when $q=p=2$.
These statements show \eqref{con} with $\lambda$ given by \eqref{lam}.

The pathwise continuous dependence implies the uniqueness of the solution to Eq. \eqref{see}.
One can also show the Markov property for this solution using a standard method; see, e.g., \cite[Theorem 9.21]{PZ14}.
This completes the proof.
\end{proof}

\begin{rk}
The pathwise estimate \eqref{con} immediately yields an estimate between any two solutions in $\br$-Wasserstein distance for any $\br \ge 1$: let $(X^{x_i}_t)_{i=1}^2$ be the solutions to Eq. \eqref{see} starting from $(x_i)_{i=1}^2$ with laws $(\mu^i_0)_{i=1}^2$ on $L^p$, respectively, then 
\begin{align*}
{\rm W}_{\br}(\mu^1_t, \mu^2_t)
: =\inf (\ee \|X_t^{x_1}-X_t^{x_2}\|_p^{\br})^\frac1\br
\le e^{- \lambda t} {\rm W}_{\br}(\mu^1_0, \mu^2_0),
\quad t \ge 0,   
\end{align*} 
where the infimum runs over all random variables $(X^{x_i}_t)_{i=1}^2$ with laws $(\mu^i_t)_{i=1}^2$, $t \ge 0$.
A similar contraction-type estimate in $2$-Wasserstein distance on $\rr^d$ had been investigated in \cite{BGG12(JFA)}.
\end{rk}

\subsection{Construction of coupling and moments' estimations}

Let $T>0$ be fixed throughout the rest of Section \ref{sec3} and set
\begin{align} \label{gt} 
\gamma_t:=\int_0^{T-t} e^{2 \lambda r} {\rm d}r
=\frac{e^{2 \lambda (T-t)}-1}{2 \lambda},
\quad t \in [0, T],
\end{align} 
where $\lambda$ is given in \eqref{lam}.
For convention, if $\lambda=0$, we set $\frac{e^{2 \lambda t}-1}{2 \lambda}:=t$ for $t \in [0, T]$.
Then $\gamma$ is smooth, strictly positive, and strictly decreasing on $[0,T)$ (with $\gamma_T=0$) such that
\begin{align} \label{gt=}   
\gamma_t'+2 \lambda \gamma_t+1=0.
\end{align}
Moreover, the integrals of $\gamma^{-1}$ and $\gamma^{-2}$ on $[0, T)$ diverge:
\begin{align} \label{gt>}   
\int_0^T \frac1{\gamma_t} {\rm d}t=\infty, \quad 
\int_0^T \frac1{\gamma_t^2} {\rm d}t=\infty.
\end{align}

Now, we can define the coupling $Y$ of $X$ as the mild solution of the coupling equation 
\begin{align} \label{rd-y}   
{\rm d}Y_t =( AY_t+F(Y_t)+\gamma_t^{-1} (X_t-Y_t) ){\rm d}t+G {\rm d}W_t, 
\end{align}
with an initial datum $Y_0=y \in L^p$. 
Since the additional drift term $\gamma_t^{-1} (X_t-Y_t)$ is Lipschitz continuous for each fixed $t \in [0, T)$ and $\omega \in \Omega$, one can use similar arguments in Lemma \ref{lm-lq} to show that $Y$ is a well-defined continuous process on $[0, T)$.

\begin{rk} \label{rk-Y}
As $\gamma^{-1}$ is continuous and thus integrable on $[0, T_0] \subset [0, T)$ for any $T_0 \in (0, T)$, one can use the arguments in Lemma \ref{lm-lq} to extend the local solution to $[0, T)$.
However, it is difficult to get a uniform a priori estimation, following the idea in Lemma \ref{lm-lq}, to conclude the well-posedness of $Y$ at $T$ as $\gamma_T^{-1}$ satisfies \eqref{gt>}. 
\end{rk}

For each $s \in [0, T)$, we set  
\begin{align} \label{v} 
v_s:=\frac{G^{-1} (X_s-Y_s)}{\gamma_s}, \quad 
\widetilde{W}_s:=W_s+\int_0^s v_r {\rm d}r,
\end{align}
and define
\begin{align} \label{M}
M_s: & =\exp\Big( -\int_0^s (v_r, {\rm d}W_r)
-\frac12 \int_0^s \|v_r\|^2 {\rm d}r\Big).
\end{align} 
From \eqref{v} and \eqref{M}, $M$ can also be rewritten as  
\begin{align} \label{M+}
M_s: & =\exp\Big( -\int_0^s (v_r, {\rm d}{\widetilde W}_r)
+\frac12 \int_0^s \|v_r\|^2 {\rm d}r\Big), \quad s \in [0, T).
\end{align} 

By the representation \eqref{v} and the nondegenerate condition \eqref{G-1}, we have 
\begin{align} \label{est-v} 
\frac12 \int_0^s \|v_r\|^2 {\rm d}r
\le \frac{ \|G^{-1}\|_\infty^2}2 \int_0^s \frac{\|X_r-Y_r\|_p^2}{\gamma_r^2} {\rm d}r.
\end{align}
We first show that for any $s \in (0, T)$, 
$${\mathbb Q}_s:=M_s \pp$$ 
is a probability measure, of which $\ee_s$ denotes the expectation, ensured by a Novikov condition.
Moreover, $(\widetilde W_t)_{t \in [0, s]}$ is a $U$-valued cylindrical Wiener process under the probability measure ${\mathbb Q}_s$ through Girsanov theorem.

\begin{lm} \label{lm-R-}
Let Assumptions \ref{ap-f}, \ref{ap-wa}, and \ref{ap-ell} hold.
For any $s \in (0, T)$, ${\mathbb Q}_s$ is a probability measure and $(\widetilde W_t)_{t \in [0, s]}$ is a $U$-valued cylindrical Wiener process under ${\mathbb Q}_s$.
\end{lm}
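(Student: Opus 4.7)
The plan is to invoke Girsanov's theorem on the interval $[0,s]$, and hence to verify Novikov's condition
\begin{align*}
\ee \exp\Bigl(\tfrac12 \int_0^s \|v_r\|^2 \,{\rm d}r\Bigr) < \infty.
\end{align*}
In view of \eqref{est-v}, this will reduce to controlling the pathwise integral $\int_0^s \|X_r-Y_r\|_p^2/\gamma_r^2\,{\rm d}r$, so the crux is a sharp pathwise decay estimate of $\|X_t-Y_t\|_p$ expressed in terms of $\gamma_t$, presumably the one labeled \eqref{x-y} in the introduction.

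To derive this estimate, I would note that the difference $D_t:=X_t-Y_t$ satisfies the \emph{noise-free} random PDE
\begin{align*}
\partial_t D_t = A D_t + F(X_t)-F(Y_t) - \gamma_t^{-1} D_t,
\end{align*}
since $X$ and $Y$ are driven by the same Wiener process $W$. Testing against $p|D_t|^{p-2}D_t$ and applying the dissipativity condition \eqref{f}---together with the Poincar\'e inequality \eqref{poin} when $p=2$ or absorbing the $\theta$-term when $q=2$, exactly as in the derivation of \eqref{con-xy} in Lemma~\ref{lm-lq}---would yield
\begin{align*}
\partial_t \|D_t\|_p^p \le -p\lambda \|D_t\|_p^p - p\gamma_t^{-1} \|D_t\|_p^p,
\end{align*}
with $\lambda$ as in \eqref{lam}. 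Multiplying by the integrating factor and using the identity $\int_0^t \gamma_r^{-1}\,{\rm d}r = \log\bigl(\gamma_0/(\gamma_t e^{2\lambda t})\bigr)$, which is an immediate consequence of \eqref{gt=}, I expect the clean pathwise bound
\begin{align*}
\|X_t-Y_t\|_p \le \frac{\gamma_t e^{\lambda t}}{\gamma_0}\,\|x-y\|_p, \quad t\in[0,T).
\end{align*}

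Substituting this into \eqref{est-v} gives, for every $s\in(0,T)$,
\begin{align*}
\tfrac12 \int_0^s \|v_r\|^2 \,{\rm d}r \le \frac{\|G^{-1}\|_\infty^2\,\|x-y\|_p^2}{2\gamma_0^2}\cdot\frac{e^{2\lambda s}-1}{2\lambda},
\end{align*}
which is a \emph{deterministic}, finite bound; hence the Novikov condition is satisfied trivially, $M_s$ is a genuine $\pp$-martingale with $\ee M_s=1$, $\mathbb{Q}_s:=M_s\pp$ is a probability measure, and Girsanov's theorem identifies $(\widetilde W_t)_{t\in[0,s]}$ as a $U$-valued cylindrical Wiener process under $\mathbb{Q}_s$. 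The only subtlety I anticipate is bookkeeping rather than substance: the construction cannot be pushed to $s=T$ because $Y_T$ has not been defined (cf.\ Remark~\ref{rk-Y}), even though the integrand $\|D_r\|_p^2/\gamma_r^2$ is in fact uniformly bounded on $[0,T]$ --- an observation that will be convenient when passing to the limit $s\uparrow T$ in the subsequent derivation of the Harnack inequalities \eqref{har-log} and \eqref{har-pow}.
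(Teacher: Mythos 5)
Your proposal is correct and takes essentially the same route as the paper's proof: the same noise-free pathwise equation for $X-Y$, the same $L^p$ energy estimate using \eqref{f} (with the Poincar\'e/$\theta$ refinements when $p=2$ or $q=2$), verification of Novikov's condition, and Girsanov's theorem. The only difference is how the differential inequality $\partial_t\|X_t-Y_t\|_p^2\le -2(\lambda+\gamma_t^{-1})\|X_t-Y_t\|_p^2$ is integrated: the paper uses the quantity $\gamma_t^{-1}\|X_t-Y_t\|_p^2$ together with \eqref{gt=} to obtain the combined estimate \eqref{x-y}, whereas you solve it explicitly by Gronwall (your identity $\int_0^t\gamma_r^{-1}\,{\rm d}r=\log\bigl(\gamma_0/(\gamma_t e^{2\lambda t})\bigr)$ is indeed valid) to get the pointwise decay $\|X_t-Y_t\|_p\le \gamma_0^{-1}\gamma_t e^{\lambda t}\|x-y\|_p$; both give a deterministic bound on $\tfrac12\int_0^s\|v_r\|^2\,{\rm d}r$, yours increasing to the paper's bound $\|G^{-1}\|_\infty^2\|x-y\|_p^2/(2\gamma_0)$ as $s\uparrow T$, so the Novikov--Girsanov conclusion is identical.
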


\begin{proof} 
It follows from Eq. \eqref{see} and Eq. \eqref{rd-y} on $[0, s]$ that   
\begin{align} \label{x-y-p}
\partial_t (X_t-Y_t) &=A (X_t-Y_t)+F(X_t)-F(Y_t)
- \gamma_t^{-1} (X_t-Y_t), \quad \pp\text{-a.s.} 
\end{align} 
As in the proof of the inequality \eqref{con-xy}, we test $p |X_t-Y_t|^{p-2} (X_t-Y_t)$ on the above equation, use the integration by parts formula, and apply the condition \eqref{f} to obtain 
\begin{align*} 
& \partial_t \|X_t-Y_t\|_p^p 
+ p(p-1) \int_{\OOO} |X_t-Y_t|^{p-2} |\nabla (X_t-Y_t)|^2 {\rm d} \xi
\\
& \le p L_f \|X_t-Y_t\|_p^p - p \theta \|X_t-Y_t\|_{q+p-2}^{q+p-2}
- p\gamma_t^{-1} \|X_t-Y_t\|_p^p.
\end{align*} 
It follows from the chain rule that 
\begin{align*} 
& \partial_t \|X_t-Y_t\|_p^2
=\frac2p \|X_t-Y_t\|_p^{2-p}\partial_t \|X_t-Y_t\|_p^p \\
& \le - 2(p-1) \|X_t-Y_t\|_p^{2-p} \int_{\OOO} |X_t-Y_t|^{p-2} |\nabla (X_t-Y_t)|^2 {\rm d} \xi \\
& +2 L_f \|X_t-Y_t\|^2_p 
- 2 \theta \|X_t-Y_t\|_p^{2-p} \|X_t-Y_t\|_{q+p-2}^{q+p-2}
- 2 \gamma_t^{-1}\|X_t-Y_t\|^2_p,
\end{align*} 
and thus
\begin{align*} 
\partial_t \|X_t-Y_t\|_p^2 
\le -2 \lambda \|X_t-Y_t\|^2_p  - 2 \gamma_t^{-1}\|X_t-Y_t\|^2_p.
\end{align*} 
The product rule of differentiation and the equality \eqref{gt=} yield that 
\begin{align*} 
\partial_t (\gamma_t^{-1} \|X_t-Y_t\|_p^2 )
& =\gamma_t^{-1} \partial_t \|X_t-Y_t\|_p^2
-\gamma_t^{-2} \gamma'_t \|X_t-Y_t\|_p^2 \\
& \le -\gamma_t^{-2} (\gamma'_t+2 \lambda \gamma_t+2) \|X_t-Y_t\|_p^2 \\
& = - \gamma_t^{-2} \|X_t-Y_t\|_p^2.
\end{align*} 
Integrating on both sides from $0$ to $s$, we obtain 
\begin{align} \label{x-y} 
\frac{\|X_s-Y_s\|_p^2}{\gamma_s}
+\int_0^s \frac{\|X_t-Y_t\|_p^2}{\gamma_t^2} {\rm d}t 
\le \frac{\|x-y\|_p^2}{\gamma_0}, \quad \pp\text{-a.s.}
\end{align} 
This pathwise estimate, in combination with the estimate \eqref{est-v}, particularly implies the Novikov condition
\begin{align} \label{nov} 
\ee \exp \Big(\frac12 \int_0^s \|v_t\|^2 {\rm d}t \Big)
\le \exp \Big(\frac{\|G^{-1}\|_\infty^2 \|x-y\|_p^2}{2 \gamma_0}\Big)
<\infty.
\end{align} 
This shows that $\ee M_s=1$ and thus ${\mathbb Q}_s:=M_s \pp$ is a probability measure on $\FFF_s$ equivalent to $\pp$.
By Girsanov theorem, $(\widetilde W_t)_{t \in [0, s]}$ is a $U$-valued cylindrical Wiener process under ${\mathbb Q}_s$. 
\end{proof}

We note that the Novikov condition \eqref{nov} also holds with $s=T$, according to the estimates \eqref{est-v} and \eqref{x-y} with $s=T$.
Therefore, ${\mathbb Q}:=M_T \pp$ is a probability measure on $\FFF_T$ equivalent to $\pp$. We denote by $\ee_{\mathbb Q}$ the expectation concerning ${\mathbb Q}$.

Next, we will give two uniform moments' estimations for certain functionals of $(M_s)$.

\begin{lm} \label{lm-R}
Let Assumptions \ref{ap-f}, \ref{ap-wa}, and \ref{ap-ell} hold.
Then 
\begin{align} \label{est-R}
& \sup_{s \in [0, T)} \ee[M_s \log M_s]  
\le \frac{\lambda \|G^{-1}\|_\infty^2}{e^{2 \lambda T}-1}\|x-y\|_p^2,
\quad x, y \in L^p. 
\end{align} 
Consequently, $M_T:=\lim_{s \uparrow T} M_s$ exists and $(M_s)_{s \in [0, T]}$ is a well-defined uniformly integrable martingale (under $\pp$).
\end{lm}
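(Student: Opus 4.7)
The plan is to identify $\ee[M_s \log M_s]$ with the relative entropy of ${\mathbb Q}_s = M_s \pp$ with respect to $\pp$, and compute it using the alternative representation \eqref{M+}. By Lemma \ref{lm-R-}, $(\widetilde W_t)_{t \in [0, s]}$ is a $U$-valued cylindrical Wiener process under ${\mathbb Q}_s$, so I would write
\begin{align*}
\ee[M_s \log M_s]
& = \ee_{{\mathbb Q}_s}[\log M_s] \\
& = -\ee_{{\mathbb Q}_s}\Big[\int_0^s (v_r, {\rm d}\widetilde W_r)\Big]
+\frac12 \ee_{{\mathbb Q}_s}\Big[\int_0^s \|v_r\|^2 {\rm d}r\Big].
\end{align*}

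The key observation is that the pathwise estimates \eqref{est-v} and \eqref{x-y} combine to give the uniform bound $\int_0^s \|v_r\|^2 {\rm d}r \le \|G^{-1}\|_\infty^2 \|x-y\|_p^2/\gamma_0$, which holds $\pp$-a.s.\ and therefore also ${\mathbb Q}_s$-a.s. Because the quadratic variation of the stochastic integral $\int_0^{\cdot} (v_r, {\rm d}\widetilde W_r)$ is thus almost surely bounded under ${\mathbb Q}_s$, this integral is a genuine (not merely local) martingale under ${\mathbb Q}_s$ and its expectation vanishes. Substituting the same pathwise bound into the second term and recalling from \eqref{gt} that $\gamma_0 = (e^{2 \lambda T}-1)/(2 \lambda)$ delivers \eqref{est-R}.

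For the consequence, the Novikov condition \eqref{nov} already implies that $(M_s)_{s \in [0, T_0]}$ is a true martingale for every $T_0 \in [0, T)$, so $(M_s)_{s \in [0, T)}$ is a positive $(\FFF_s)$-martingale. The bound \eqref{est-R} is exactly the de la Vall\'ee Poussin criterion with $\Phi(x) = x \log x$, hence the family $\{M_s : s \in [0,T)\}$ is uniformly integrable. I would then invoke the martingale convergence theorem to conclude that $M_T := \lim_{s \uparrow T} M_s$ exists $\pp$-a.s.\ and in $L^1(\pp)$, and that $M_s = \ee[M_T | \FFF_s]$ for all $s \in [0, T]$, which is the claimed martingale property on the closed interval.

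The one step that needs care is the vanishing of the stochastic-integral term under ${\mathbb Q}_s$: because $v$ takes values in the (infinite-dimensional) space $U$ and we integrate against a cylindrical Wiener process, one must verify genuine $L^2$-integrability of the integrand under the new measure. This is precisely what the ${\mathbb Q}_s$-a.s.\ (in fact deterministic) bound on $\int_0^s \|v_r\|^2 {\rm d}r$ furnishes, placing the integrand in $L^2(\Omega \times [0,s]; U)$ under ${\mathbb Q}_s$ and reducing the argument to the standard It\^o isometry. Everything else is a direct consequence of \eqref{est-v}--\eqref{x-y} and of Girsanov's theorem already invoked in Lemma \ref{lm-R-}.
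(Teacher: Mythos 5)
Your proposal is correct and follows essentially the same route as the paper: both use the representation \eqref{M+}, transfer the pathwise bound \eqref{est-v}--\eqref{x-y} to ${\mathbb Q}_s$, and exploit that $\widetilde W$ is a cylindrical Wiener process under ${\mathbb Q}_s$ (Lemma \ref{lm-R-}) so that the stochastic integral has vanishing ${\mathbb Q}_s$-expectation, yielding $\ee[M_s\log M_s]=\ee_{{\mathbb Q}_s}[\log M_s]\le \|G^{-1}\|_\infty^2\|x-y\|_p^2/(2\gamma_0)$. Your write-up is in fact slightly more careful than the paper's at two points it leaves implicit --- justifying the true-martingale property of $\int_0^\cdot (v_r,{\rm d}\widetilde W_r)$ under ${\mathbb Q}_s$ via the deterministic bound on its quadratic variation, and deducing uniform integrability of $(M_s)_{s\in[0,T)}$ from \eqref{est-R} by de la Vall\'ee Poussin before closing the martingale at $T$.
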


\begin{proof} 
Let $s \in [0, T)$ be fixed.
By the construction \eqref{v}, we can rewrite Eq. \eqref{see} and Eq. \eqref{rd-y} on $[0, s]$ as  
\begin{align} 
{\rm d}X_t &=( AX_t+F(X_t)-\gamma_t^{-1}(X_t-Y_t) ) {\rm d}t +G {\rm d}{\widetilde W}_t, \label{rd-x+}   \\
{\rm d}Y_t &=(AY_t+F(Y_t)) {\rm d}t+G {\rm d}{\widetilde W}_t, \label{rd-y+}  
\end{align}
with initial values $X_0=x$ and $Y_0=y$, respectively.
Then Eq. \eqref{x-y-p} about $X-Y$ also holds ${\mathbb Q}_s$-a.s. by the equivalence between $\pp$ and ${\mathbb Q}_s$ on $\FFF_s$.  
Therefore, the pathwise estimate \eqref{x-y} is valid ${\mathbb Q}_s$-a.s., which in combination with the equality \eqref{M+} and the estimate \eqref{est-v} implies that
\begin{align} \label{log-m}
\log M_s & =-\int_0^s (v_r, {\rm d}{\widetilde W}_r)
+\frac12 \int_0^s \|v_r\|^2 {\rm d}r  \nonumber \\
& \le -\int_0^s (v_r, {\rm d}{\widetilde W}_r)
+\frac{\|G^{-1}\|_\infty^2 \|x-y\|_p^2}{2 \gamma_0}, \quad {\mathbb Q}_s\text{-a.s.} 
\end{align}  
Taking into account the fact in Lemma \ref{lm-R-} that $(\widetilde W_t)_{t \in [0, s]}$ is a $U$-valued cylindrical Wiener process under ${\mathbb Q}_s$, we arrive at
\begin{align*}  
& \ee[M_s \log M_s]  
=\ee_s \log M_s  
\le \frac{\|G^{-1}\|_\infty^2 \|x-y\|_p^2}{2 \gamma_0},   
\end{align*} 
and thus obtain \eqref{est-R}, noting that $\gamma_0$ is given in \eqref{gt} with $t=0$.
By the martingale convergence theorem, $M_T:=\lim_{s \uparrow T} M_s$ exists and $(M_t)_{t \in [0, T]}$ is a martingale.
\end{proof}

Lemmas \ref{lm-R-}-\ref{lm-R} ensures that $(\widetilde W_t)_{t \in [0, T]}$ is a $U$-valued cylindrical Wiener process under the probability measure ${\mathbb Q}$ and
\begin{align} \label{est-R-T}
& \sup_{s \in [0, T]} \ee[M_s \log M_s]  
\le \frac{\lambda \|G^{-1}\|_\infty^2}{e^{2 \lambda T}-1}\|x-y\|_p^2,
\quad x, y \in L^p. 
\end{align}

\begin{lm} \label{lm-R+}
Let Assumptions \ref{ap-f}, \ref{ap-wa}, and \ref{ap-ell} hold.
For any $x, y \in L^p$ and $\bs>1$, 
\begin{align} \label{est-R+}  
& \sup_{s \in [0, T]} \ee M_s^{\frac \bs{\bs-1}} 
\le \exp\Big(\frac{\bs \lambda \|G^{-1}\|_\infty^2 \|x-y\|_p^2}{(\bs-1)^2 (e^{2 \lambda T}-1)}\Big).
\end{align}  
\end{lm}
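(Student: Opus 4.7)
The plan is to write $M_s^{\bs/(\bs-1)}$ as a Dol\'eans--Dade exponential martingale times a deterministic correction coming from the pathwise estimate \eqref{x-y}, so that the expectation collapses to the correction factor.

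Set $\bs' := \bs/(\bs-1)$, so that $\bs'-1 = 1/(\bs-1)$ and hence $\bs'(\bs'-1) = \bs/(\bs-1)^2$. Starting from the definition \eqref{M},
\begin{align*}
M_s^{\bs'}
&= \exp\Bigl(-\bs'\!\int_0^s (v_r,\mathrm d W_r) - \tfrac{\bs'}{2}\!\int_0^s \|v_r\|^2\,\mathrm d r\Bigr) \\
&= \widetilde M_s\,\exp\Bigl(\tfrac{\bs'(\bs'-1)}{2}\!\int_0^s \|v_r\|^2\,\mathrm d r\Bigr),
\end{align*}
where $\widetilde M_s := \exp\bigl(-\bs'\int_0^s(v_r,\mathrm d W_r) - \tfrac{(\bs')^2}{2}\int_0^s\|v_r\|^2\mathrm d r\bigr)$ is the exponential process associated with the drift $\bs' v$.

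Next I would invoke the \emph{deterministic} bound on the quadratic variation already built into the argument of Lemma \ref{lm-R-}: combining \eqref{est-v} with the pathwise inequality \eqref{x-y} gives
\begin{align*}
\int_0^s \|v_r\|^2\,\mathrm d r
\;\le\; \|G^{-1}\|_\infty^2 \int_0^s \frac{\|X_r-Y_r\|_p^2}{\gamma_r^2}\,\mathrm d r
\;\le\; \frac{\|G^{-1}\|_\infty^2\,\|x-y\|_p^2}{\gamma_0},
\qquad \pp\text{-a.s.}
\end{align*}
for every $s\in[0,T]$. This uniform $\pp$-a.s.\ bound on $\int_0^s\|\bs' v_r\|^2\mathrm d r$ makes Novikov's condition for $\widetilde M$ trivial, so $(\widetilde M_s)_{s\in[0,T]}$ is a genuine martingale and in particular $\ee \widetilde M_s = 1$.

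Taking expectations and using the deterministic bound on the correction factor,
\begin{align*}
\ee M_s^{\bs'}
\;\le\; \exp\Bigl(\tfrac{\bs'(\bs'-1)}{2}\cdot\tfrac{\|G^{-1}\|_\infty^2\,\|x-y\|_p^2}{\gamma_0}\Bigr)\,\ee \widetilde M_s.
\end{align*}
Substituting $\bs'(\bs'-1) = \bs/(\bs-1)^2$ and $1/\gamma_0 = 2\lambda/(e^{2\lambda T}-1)$ (with the usual convention $1/\gamma_0 = 1/T$ if $\lambda=0$) then yields exactly the bound \eqref{est-R+}, uniformly in $s\in[0,T]$. There is no genuine obstacle here: the only point one has to be careful about is to verify that $\widetilde M$ is a true martingale rather than merely a local one, which is why the \emph{uniform in $\omega$} bound from \eqref{x-y} is essential — the $L^2$-Novikov condition would not have sufficed, but the $L^\infty$-bound on the quadratic variation trivializes it.
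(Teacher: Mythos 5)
Your proof is correct, and it takes a genuinely different route from the paper's in one respect: you never leave the original measure $\pp$. The paper first uses the change of measure ${\mathbb Q}_s = M_s\pp$ to write $\ee M_s^{\bs/(\bs-1)} = \ee_s M_s^{1/(\bs-1)}$, then factors $M_s^{1/(\bs-1)}$ via the representation \eqref{M+} in terms of $\widetilde W$ (a ${\mathbb Q}_s$-Wiener process by Lemma \ref{lm-R-}), and finally invokes the pathwise estimate \eqref{x-y} in its ${\mathbb Q}_s$-a.s.\ form. You instead factor $M_s^{\bs/(\bs-1)}$ directly via \eqref{M} as the Dol\'eans--Dade exponential of $-\frac{\bs}{\bs-1}\int_0^s(v_r,{\rm d}W_r)$ times $\exp\bigl(\frac{\bs}{2(\bs-1)^2}\int_0^s\|v_r\|^2{\rm d}r\bigr)$, and use \eqref{est-v} together with \eqref{x-y} in its $\pp$-a.s.\ form; this is legitimate here because \eqref{x-y} is a genuinely pathwise estimate (the difference $X-Y$ solves the noise-free random PDE \eqref{x-y-p}, so the derivation is deterministic for a.e.\ fixed $\omega$ and holds under either equivalent measure). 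What your version buys is economy: for this lemma you need neither Girsanov's theorem nor Lemma \ref{lm-R-}, only the Novikov-type bound \eqref{nov} and the fact that $W$ is a $\pp$-Wiener process. What the paper's version buys is robustness: passing to ${\mathbb Q}_s$ is the form of the argument that survives when the bound on $\int_0^s\|v_r\|^2{\rm d}r$ is available only under the coupled measure (e.g.\ for multiplicative noise, where the noise no longer cancels in $X-Y$), and it runs in parallel with the entropy estimate of Lemma \ref{lm-R}. Two small points to tidy up: your argument establishes the bound for $s\in[0,T)$, where $v$ and \eqref{x-y} are defined, and the endpoint $s=T$ should then be recovered from Fatou's lemma together with $M_T=\lim_{s\uparrow T}M_s$ provided by Lemma \ref{lm-R}; the algebra itself checks out, since $\frac{\bs}{\bs-1}\bigl(\frac{\bs}{\bs-1}-1\bigr)=\frac{\bs}{(\bs-1)^2}$ and $\gamma_0^{-1}=\frac{2\lambda}{e^{2\lambda T}-1}$ give exactly the exponent in \eqref{est-R+}.
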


\begin{proof}
Let $s \in [0, T]$.
Denote by $v^\bs_r:=-\frac1{\bs-1} v_r$ for $r \in [0, s] \subset [0, T]$.
The representation \eqref{M+} and the pathwise estimate \eqref{x-y} with $\gamma_0$ given in \eqref{gt} yield that  
\begin{align*}
M_s^{\frac1{\bs-1}}
& =\exp \Big( -\frac1{\bs-1}\int_0^s \<v_r, {\rm d} {\widetilde W}_r\> + \frac1{2(\bs-1)} \int_0^s \|v_r\|^2 {\rm d}r \Big)  \\
& =\exp \Big( \int_0^s \<v^\bs_r, {\rm d}{\widetilde W}_r \>
- \frac12 \int_0^s \|v^\bs_r\|^2 {\rm d}r \Big) \\
& \qquad \times \exp \Big(\frac\bs{2(\bs-1)^2} \int_0^s \| v_r \|^2 {\rm d}r \Big) \\ 
& \le {\widetilde M}_s \exp\Big(\frac{\bs \lambda \|G^{-1}\|_\infty^2 \|x-y\|_p^2}{(\bs-1)^2 (e^{2 \lambda T}-1)}\Big), \quad {\mathbb Q}_s\text{-a.s.} 
\end{align*}  
where ${\widetilde M}_s:=\exp( \int_0^s \<v^\bs_r, {\rm d}{\widetilde W}_r \>
- \frac12 \int_0^s \|v^\bs_r\|^2 {\rm d}r)$.
It follows that   
\begin{align*}
\ee M_s^{\frac \bs{\bs-1}}
=\ee_s M_s^{\frac1{\bs-1}} 
\le \exp\Big(\frac{\bs \lambda \|G^{-1}\|_\infty^2 \|x-y\|_p^2}{(\bs-1)^2 (e^{2 \lambda T}-1)}\Big) \ee_s {\widetilde M}_s.
\end{align*} 
Taking into account the fact that $(\widetilde W_t)_{t \in [0, T]}$ is a $U$-valued cylindrical Wiener process under ${\mathbb Q}$ and the Novikov condition that 
\begin{align*} 
\ee_s \exp \Big(\frac12 \int_0^s \|v^\bs_r\|^2 {\rm d}r\Big)
& =\ee_s \exp \Big(\frac1{2(\bs-1)^2} \int_0^s \|v_r\|^2 {\rm d}r\Big) \\
& \le \exp \Big(\frac{\|G^{-1}\|_\infty^2 \|x-y\|_p^2}{2(\bs-1)^2 \gamma_0}\Big) 
<\infty, 
\end{align*} 
we have $\ee_s {\widetilde M}_s=1$ and obtain \eqref{est-R+}.   
\end{proof}

\section{Harnack Inequalities and Ergodicity}
\label{sec4}

In the last section, we derive Harnack inequalities and the ergodicity for the Markov semigroup $(P_t)$. 
In the first two parts, we give the proof of our main results, Theorems \ref{main1} and \ref{main2}, respectively.
Other applications, inclusive of several estimates for the density of $(P_t)$, are also derived.

\subsection{Harnack inequalities}

We begin with the following Harnack inequalities.

\begin{tm} \label{tm-har}
Let Assumptions \ref{ap-f}, \ref{ap-wa}, and \ref{ap-ell} hold.
Then \eqref{har-log}-\eqref{har-pow} hold for any $T > 0$, $\bs >1$, $x, y \in L^p$, and $\phi \in \BB^+_b(L^p)$.
\end{tm}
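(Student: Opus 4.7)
The plan is to exploit the coupling by change of measure set up in Section \ref{sec3}, in the standard Wang scheme: show that the coupling is successful at time $T$, use Girsanov to transfer the law of $Y^y$ under $\pp$ to the law of $X^x$ under the new measure, and then bound the Radon--Nikodym derivative using Lemmas \ref{lm-R} and \ref{lm-R+}.

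First, I would prove the coupling is successful, i.e.\ $X_T = Y_T$ $\pp$-a.s. The pathwise estimate \eqref{x-y} gives
\[
\|X_s - Y_s\|_p^2 \le \frac{\gamma_s}{\gamma_0} \|x-y\|_p^2, \qquad s \in [0, T),
\]
and since $\gamma_T = 0$ and $s \mapsto X_s, Y_s$ are continuous into $L^p$ by Lemma \ref{lm-lq} (and an analogous statement for $Y$), letting $s \uparrow T$ yields $X_T = Y_T$ $\pp$-a.s. (equivalently ${\mathbb Q}$-a.s., since ${\mathbb Q} \sim \pp$ on $\FFF_T$ by Lemma \ref{lm-R}).

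Next, I would invoke Girsanov: by Lemmas \ref{lm-R-} and \ref{lm-R}, $(\widetilde W_t)_{t \in [0,T]}$ is a cylindrical Wiener process under ${\mathbb Q}$, and from \eqref{rd-y+} the process $Y$ satisfies the same SPDE under ${\mathbb Q}$ driven by $\widetilde W$ as $X$ does under $\pp$ driven by $W$, but with initial datum $y$. By uniqueness in law (Lemma \ref{lm-lq}) and the coupling identity $X_T = Y_T$,
\[
P_T \phi(y) = \ee_{\pp}[\phi(X_T^y)] = \ee_{{\mathbb Q}}[\phi(Y_T)] = \ee_{{\mathbb Q}}[\phi(X_T)] = \ee_{\pp}[M_T \phi(X_T^x)],
\]
and similarly with $\phi$ replaced by $\log \phi$.

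For the log-Harnack inequality \eqref{har-log}, I would apply the entropy (Young) inequality $\ee[M \log \phi] \le \log \ee[\phi] + \ee[M \log M]$, valid when $M \ge 0$ with $\ee M = 1$ and $\phi > 0$, to get
\[
P_T \log \phi(y) = \ee_\pp[M_T \log \phi(X_T^x)] \le \log P_T \phi(x) + \ee[M_T \log M_T],
\]
and then bound $\ee[M_T \log M_T]$ by passing $s \uparrow T$ in \eqref{est-R-T} (using Fatou and noting $2\gamma_0 = (e^{2\lambda T}-1)/\lambda$). For the power-Harnack inequality \eqref{har-pow}, I would instead use H\"older's inequality with exponents $\bs$ and $\bs/(\bs-1)$:
\[
P_T \phi(y) = \ee_\pp[M_T \phi(X_T^x)] \le \bigl(\ee M_T^{\bs/(\bs-1)}\bigr)^{(\bs-1)/\bs} \bigl(P_T \phi^\bs(x)\bigr)^{1/\bs},
\]
and raise to the $\bs$-th power, inserting the bound from Lemma \ref{lm-R+}.

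The only nonroutine point I foresee is the justification of the coupling identity $X_T = Y_T$ under ${\mathbb Q}$; the pathwise estimate \eqref{x-y} was derived on $[0, s]$ with $s < T$ because of the singular drift $\gamma_t^{-1}(X_t - Y_t)$ at $t = T$, so one should first argue $Y$ extends continuously up to $T$ (in view of Remark \ref{rk-Y}, exploiting that the right-hand side of \eqref{x-y} forces $Y_s \to X_T$ in $L^p$ as $s \uparrow T$), and then take the limit $s \uparrow T$ to pass both the coupling identity and the entropy bound to the endpoint. Once this is in place, all remaining inequalities are algebraic manipulations of Lemmas \ref{lm-R} and \ref{lm-R+}.
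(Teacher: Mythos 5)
Your proposal is correct and follows essentially the same route as the paper: a coupling by change of measure whose success at time $T$ is extracted from the pathwise estimate \eqref{x-y}, followed by the Girsanov transfer plus the entropy inequality and H\"older's inequality, with the bounds supplied by Lemmas \ref{lm-R} and \ref{lm-R+}. The only cosmetic differences are that you deduce $X_T=Y_T$ directly from $\|X_s-Y_s\|_p^2\le(\gamma_s/\gamma_0)\|x-y\|_p^2\to 0$ as $s\uparrow T$, whereas the paper argues by contradiction via a stopping time and the divergence \eqref{gt>}, and that you prove inline the two transfer inequalities the paper imports from \cite[Theorem 1.1.1]{Wan13}.
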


\begin{proof}
We first show that $X_T=Y_T$, ${\mathbb Q}$-a.s.
From Lemma \ref{lm-R}, $({\widetilde W}_t)_{[0, T]}$ is a $U$-valued cylindrical Wiener process under ${\mathbb Q}$.
So $Y_t$ can be solved up to time $T$.
Let 
\begin{align*}
\tau:=\inf\{t \in [0, T]: \ X_t=Y_t\}
\quad \text{with} \quad \inf \emptyset : =\infty.
\end{align*}

Suppose that for some $\omega \in \Omega$ such that $\tau(\omega)>T$, then the continuity of the process $X-Y$, in Lemma \ref{lm-lq} and Remark \ref{rk-Y}, yields 
\begin{align*} 
\inf_{t \in [0, T]} \|X_t-Y_t\|_p^2(\omega)>0.
\end{align*}
By the divergence relation \eqref{gt>}, 
\begin{align*}  
\int_0^T \frac{\|X_t-Y_t\|_p^2(\omega)}{\gamma_t^2} {\rm d}t=\infty 
\end{align*}
holds on the set $(\tau>T):=\{\omega: \tau(\omega)>T\}$.
But according to the pathwise estimate \eqref{x-y} which holds ${\mathbb Q}$-a.s. and the fact that $\pp$ and ${\mathbb Q}$ are equivalent on $\FFF_T$,
\begin{align*}  
\ee_{\mathbb Q} \int_0^T \frac{\|X_t-Y_t\|_p^2(\omega)}{\gamma_t^2} {\rm d}t
\le \frac{\|x-y\|_p^2}{\gamma_0}<\infty.
\end{align*}  
It follows from the above two estimates that ${\mathbb Q}(\tau>T)=0$, i.e., $\tau \le T$ ${\mathbb Q}$-a.s.
This shows that, for almost all $\omega$, there exists $t \in [0, T]$ such that $X_t(\omega) = Y_t(\omega)$. By the pathwise uniqueness of Eq. \eqref{see} and Eq. \eqref{rd-y}, $X_T=Y_T$ ${\mathbb Q}$-a.s.  
Therefore, we get a coupling $(X,Y)$ by the change of measure, with changed probability ${\mathbb Q}= M_T \pp$, such that $X_T=Y_T$, ${\mathbb Q}$-a.s.
Consequently, the inequalities \eqref{har-log}-\eqref{har-pow} follow from established results (see, e.g., \cite[Theorem 1.1.1]{Wan13}):
\begin{align*}
P_T \log \phi(y) & \le \log P_T \phi(x)+\ee [M_T \log M_T], \\
(P_T \phi(y))^\bs & \le P_T \phi^\bs(x) (\ee M_T^{\frac \bs{\bs-1}})^{\bs-1},
\end{align*}
and the estimations \eqref{est-R} and \eqref{est-R+} in Lemmas \ref{lm-R} and \ref{lm-R+}, respectively.
\end{proof}

The log-Harnack inequality \eqref{har-log} shows the strong Feller property of $(P_t)$ and implies the following gradient estimate and regularity properties for the Markov semigroup $(P_t)$. {\color{red} Here we denote $|\nabla \phi|(x)=\limsup_{y \rightarrow x} |\phi(y)-\phi(x)|/\|y-x\|$ for $\phi \in \BB_b(L^p)$ and $x \in L^p$}.

\begin{cor} \label{cor-fel}
Let Assumptions \ref{ap-f}, \ref{ap-wa}, and \ref{ap-ell} hold.
For any $T>0$, $x \in L^p$, and $\phi \in \BB_b(L^p)$,   
\begin{align}\label{est-gra}
{\color{red} |\nabla P_T \phi|(x)} \le \sqrt{\frac{2 \lambda \|G^{-1}\|_\infty^2}{e^{2 \lambda T}-1}} \sqrt{P_T \phi^2(x)-(P_T \phi(x))^2}.
\end{align} 
Consequently, $(P_t)$ admits at most one invariant measure, and if it has one, the density of $(P_t)$ relative to the invariant measure is strictly positive.
\end{cor}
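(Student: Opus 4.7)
The plan is to derive the gradient estimate \eqref{est-gra} directly from the log-Harnack inequality \eqref{har-log} by a Taylor-expansion trick, and then to use \eqref{har-log} again (with indicator-type test functions) to obtain strict positivity of the transition density. Throughout I abbreviate $C_T := \lambda \|G^{-1}\|_\infty^2/(e^{2\lambda T}-1)$. For $\phi \in \BB_b(L^p)$ and $\epsilon > 0$, applying \eqref{har-log} with $\phi$ replaced by $e^{\epsilon\phi} \in \BB_b^+(L^p)$ gives
\begin{align*}
\epsilon P_T\phi(y) = P_T\log(e^{\epsilon\phi})(y) \le \log P_T e^{\epsilon\phi}(x) + C_T \|x-y\|_p^2.
\end{align*}
Taylor-expanding the right-hand side in $\epsilon$ and using the boundedness of $\phi$ to control the remainder yields
\begin{align*}
\log P_T e^{\epsilon\phi}(x) = \epsilon P_T\phi(x) + \tfrac{\epsilon^2}{2}\bigl(P_T\phi^2(x) - (P_T\phi(x))^2\bigr) + O(\epsilon^3).
\end{align*}
Writing $V(x) := P_T\phi^2(x)-(P_T\phi(x))^2 \ge 0$, cancelling the common linear term and dividing by $\epsilon$, I would obtain
\begin{align*}
P_T\phi(y) - P_T\phi(x) \le \tfrac{\epsilon}{2} V(x) + \frac{C_T\|x-y\|_p^2}{\epsilon} + O(\epsilon^2),
\end{align*}
and the choice $\epsilon = \sqrt{2 C_T/V(x)}\,\|x-y\|_p$ (or $\epsilon \to 0^+$ when $V(x)=0$) optimizes the right-hand side to $\sqrt{2 C_T V(x)}\,\|x-y\|_p + O(\|x-y\|_p^2)$. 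Dividing by $\|x-y\|_p$ and passing to the limit $y\to x$ yields \eqref{est-gra}.

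The strong Feller property is immediate: since $V(x) \le \|\phi\|_\infty^2$, inequality \eqref{est-gra} shows that $P_T\phi$ has upper gradient uniformly bounded by $\sqrt{2 C_T}\,\|\phi\|_\infty$, so $P_T\phi$ is Lipschitz on $L^p$ and hence lies in $\CC_b(L^p)$ for every $\phi \in \BB_b(L^p)$. The uniqueness of the invariant measure is already recorded in Theorem \ref{main1}.

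For the strict positivity, fix $x \in L^p$ and assume $\mu$ is an invariant measure. Given a measurable set $A \subset L^p$ with $P_T(x, A) = 0$, apply \eqref{har-log} to $\phi_\epsilon := \mathbf{1}_A + \epsilon \mathbf{1}_{A^c}$ for $\epsilon \in (0,1)$. Since $P_T\phi_\epsilon(x) = \epsilon$ and $P_T \log \phi_\epsilon(y) = (\log\epsilon)\,P_T(y, A^c)$, log-Harnack gives $(\log\epsilon)\,P_T(y,A^c) \le \log\epsilon + C_T\|x-y\|_p^2$, which rearranges (recall $\log\epsilon < 0$) to
\begin{align*}
P_T(y, A) \le \frac{C_T\|x-y\|_p^2}{|\log\epsilon|}.
\end{align*}
Sending $\epsilon \to 0^+$ forces $P_T(y, A) = 0$ for every $y \in L^p$, so by invariance $\mu(A) = \int P_T(y, A)\, \mu({\rm d}y) = 0$; hence $P_T(x, \cdot) \ll \mu$ with some density $p_T(x, \cdot)$. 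Applying the same argument to $A_0 := \{p_T(x, \cdot) = 0\}$ (for which $P_T(x, A_0) = 0$) gives $\mu(A_0) = 0$, i.e.\ $p_T(x, \cdot) > 0$ $\mu$-a.s.

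The main obstacle I anticipate is making the Taylor expansion rigorous with an $O(\epsilon^3)$ remainder that is uniform in $x$, so that after the $x$-dependent optimal choice of $\epsilon$ the error term does not corrupt the sharp constant $\sqrt{2 C_T}$ appearing in \eqref{est-gra}; the boundedness of $\phi$ should make this routine but requires care. The remaining steps (strong Feller, uniqueness, positivity) are essentially formal consequences of \eqref{har-log}.
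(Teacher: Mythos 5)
Your Taylor-expansion derivation of the gradient estimate \eqref{est-gra} from the log-Harnack inequality \eqref{har-log} is correct, and it is essentially the standard argument behind the result the paper itself merely cites (the paper's proof of this corollary consists of invoking Proposition 1.3.8 and Theorem 1.4.1 of \cite{Wan13} plus the remark that strong Feller follows from \eqref{est-gra}); so your route is more self-contained. Two routine points should still be added: the upper gradient involves $|P_T\phi(y)-P_T\phi(x)|$, so you must run the argument for both $\phi$ and $-\phi$ (the variance $V$ is the same for both); and when $V(x)=0$ the choice ``$\epsilon\to 0^+$'' must be coupled to $\|x-y\|_p$, e.g.\ $\epsilon=\|x-y\|_p^{2/3}$, so that both $C_T\|x-y\|_p^2/\epsilon$ and the $O(\epsilon^2)$ remainder are $o(\|x-y\|_p)$. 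The strong Feller deduction from a uniformly bounded upper gradient is fine. However, the last part of your proposal has two genuine gaps.

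First, the uniqueness claim is circular. In this paper the uniqueness assertion of Theorem \ref{main1} is \emph{deduced from} Corollary \ref{cor-fel} (the proof of Theorem \ref{main1} reads ``follows from Theorem \ref{tm-har} and Corollary \ref{cor-fel}''), so you cannot quote Theorem \ref{main1} while proving the corollary. The repair is available from what you already proved: your indicator-function computation shows that, for any Borel set $A$, $P_T(x,A)=0$ for one $x$ forces $P_T(y,A)=0$ for every $y$; equivalently, all transition probabilities $\{P_T(x,\cdot)\}_{x\in L^p}$ are mutually equivalent, i.e.\ $P_T$ is regular in the sense of Khas'minskii, and then at most one invariant measure exists by the classical Doob--Khas'minskii theorem. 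This step must be stated, not outsourced to Theorem \ref{main1}.

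Second, the absolute-continuity direction in your positivity argument is reversed. What your dichotomy plus invariance actually yields is $P_T(x,A)=0\Rightarrow\mu(A)=0$, which is $\mu\ll P_T(x,\cdot)$; the clause ``hence $P_T(x,\cdot)\ll\mu$ with some density $p_T(x,\cdot)$'' asserts the opposite implication and does not follow as written, yet it is precisely this direction that makes the density exist. The fix again uses your dichotomy: if $\mu(A)=0$, then invariance gives $\int P_T(y,A)\,\mu({\rm d}y)=\mu(A)=0$, so $P_T(y_0,A)=0$ for at least one $y_0$, and the dichotomy propagates this to $P_T(x,A)=0$ for all $x$; hence $P_T(x,\cdot)\ll\mu$ and the density $p_T(x,\cdot)$ exists. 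With both directions in hand, your final step (applying $\mu\ll P_T(x,\cdot)$ to $A_0=\{p_T(x,\cdot)=0\}$) correctly gives strict positivity $\mu$-a.e.
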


\begin{proof}
The gradient estimate \eqref{est-gra} and the uniqueness of the invariant measure for $(P_t)$ with a strictly positive density, if it exists, are direct consequences of the log-Harnack inequality \eqref{har-log}, see Proposition 1.3.8 and Theorem 1.4.1 in \cite{Wan13}, respectively.  
\end{proof}

\begin{rk} 
The uniqueness of the invariant measure, if it exists, for 1D stochastic heat equation (Eq. \eqref{see} with Lipschitz coefficients) driven by white noise on $L^p(0, 1)$ with $p>4$ was shown in \cite{BR16(DCDS)}. 
So Corollary \ref{cor-fel} can be seen as filling the gap for $p \in (2, 4]$ in the additive white noise case.
\end{rk}

\begin{rk} 
Under the conditions in Corollary \ref{cor-fel}, there exist positive constants $C, T_0$ such that
\begin{align} \label{tv}
\|\LL(X_t^x)-\LL(X_t^y)\|_{\rm TV} \le C e^{-\lambda t} \|x-y\|_p,
\quad t \ge T_0,  \ x, y \in L^p,
\end{align}
where $\LL(X)$ denotes the distribution of $X$ on $L^p$, $\lambda$ is given in \eqref{lam}, and $\|\cdot \|_{\rm TV}$ denotes the total variation norm betweem two signed measures:
\begin{align*}
\|\mu-\nu\|_{\rm TV}:=\sup_{\|\phi\|_\infty \le 1}|\int_{L^p} \phi {\rm d}\mu-\int_{L^p} \phi {\rm d}\nu|,
\end{align*} 
for two signed measures $\mu$ and $\nu$, with  
$\|\phi\|_\infty:=\sup_{x \in L^p} |\phi(x)|$.
\end{rk}

\begin{proof}[Proof of Theorem \ref{main1}]
Theorem \ref{main1} follows from Theorem \ref{tm-har} and Corollary \ref{cor-fel}.
\end{proof}

\subsection{Ergodicity}

In this part, we show the existence of an invariant measure for the Markov semigroup $(P_t)$.
In combination with the uniqueness of the invariant measure, as shown in Corollary \ref{cor-fel}, we derive the existence of a unique and, thus, ergodic invariant measure. 
We also note that \cite[Theorem 6.1]{BG99(SPA)} used the factorization approach to obtain the existence of an invariant measure in $L^p$ with $p \ge 2$ under the martingale solution framework.

\begin{tm} \label{tm-erg}
Let Assumptions \ref{ap-f}, \ref{ap-wa}, and \ref{ap-wa+} hold. 
Assume that $q>2$ and \eqref{d} holds.
Then $(P_t)$ has an invariant measure $\mu$ with full support on $L^p$ such that $\mu( \|\cdot\|_{q+p-2}^{q+p-2})<\infty$.
Assume furthermore that Assumption \ref{ap-ell} holds, then $\mu$ is the unique and thus ergodic invariant measure of $(P_t)$. 
\end{tm}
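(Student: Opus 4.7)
The plan is to apply the Krylov--Bogoliubov method to the Ces\`aro time averages
\begin{align*}
\mu_n(A):=\frac1n \int_0^n \pp(X_t^0 \in A) {\rm d}t, \quad A \in \BB(L^p),
\end{align*}
so that the whole argument reduces to (i) proving $(\mu_n)_{n \in \nn_+}$ is tight on $L^p$, and (ii) using Theorem \ref{main1} and Corollary \ref{cor-fel} to upgrade the invariant measure to the unique, fully supported, ergodic one. Since the natural a priori bound lives in $L^{q+p-2}$ but the embedding $L^{q+p-2} \subset L^p$ is not compact, the bulk of the work is to upgrade this bound to one in the Sobolev--Slobodeckij space $W_0^{\beta, p}$, whose embedding into $L^p$ \emph{is} compact by \eqref{emb}.

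First I would establish $\sup_n \mu_n(\|\cdot\|_{q+p-2}^{q+p-2})<\infty$. Taking expectation in inequality \eqref{Zp} with $x=0$ and using $q>2$ together with the bounded-domain Young inequality $pL_f \|Z_r\|_p^p \le \frac{p\theta_1}{2}\|Z_r\|_{q+p-2}^{q+p-2}+C$ absorbs the linear $L_f$-term into the super-linear dissipation. Combined with Assumption \ref{ap-wa}, dividing by $n$ yields
\begin{align*}
\sup_n \frac1n \int_0^n \ee \|Z_r\|_{q+p-2}^{q+p-2} {\rm d}r <\infty,
\end{align*}
and the bound for $X=Z+W_A$ follows from Assumption \ref{ap-wa} and convexity.

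Next I would derive $\sup_n \mu_n(\|\cdot\|_{\beta, p})<\infty$ for some $\beta \in (0, d/p)$. Applying the ultracontractivity \eqref{ult} with $\br=p$ and $\bs=(q+p-2)/(q-1)$ to the mild form \eqref{mild} gives
\begin{align*}
\|Z_t\|_{\beta,p} \le \|S_t x\|_{\beta,p}+ C\int_0^t e^{-\lambda_1(t-r)}(t-r)^{-\alpha}\|F(X_r)\|_{(q+p-2)/(q-1)} {\rm d}r,
\end{align*}
where $\alpha=\frac{\beta}{2}+\frac{d(p-1)(q-2)}{2p(q+p-2)}$. The dimensional condition \eqref{d} is precisely the statement that $\alpha<1$ for $\beta$ small enough, which is the integrability threshold of the singular kernel. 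Using \eqref{f+} to estimate $\|F(X_r)\|_{(q+p-2)/(q-1)}^{(q+p-2)/(q-1)} \le C(1+\|X_r\|_{q+p-2}^{q+p-2})$, then combining Fubini, H\"older with conjugate exponents $\tfrac{q+p-2}{q-1},\tfrac{q+p-2}{p-1}$, and the uniform bound from the previous step, I obtain $\sup_n \tfrac1n \int_0^n \ee \|Z_t\|_{\beta, p}{\rm d}t<\infty$. Adding $\sup_t \ee \|W_A(t)\|_{\beta_0, p}<\infty$ from Assumption \ref{ap-wa+} (with $\beta \le \beta_0$) gives the desired uniform bound on $\mu_n(\|\cdot\|_{\beta, p})$. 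This step, in particular the index bookkeeping that shows \eqref{d} is the \emph{right} condition, is the technical heart of the theorem and the main obstacle.

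Finally, the compact embedding \eqref{emb} and Markov's inequality give tightness of $(\mu_n)_{n \in \nn_+}$ on $L^p$. The Feller property of $(P_t)_{t \ge 0}$ (already implicit in the pathwise continuous dependence \eqref{con} of Lemma \ref{lm-lq}) together with Krylov--Bogoliubov produces an invariant measure $\mu$; lower semicontinuity of the $\|\cdot\|_{q+p-2}^{q+p-2}$-norm under weak convergence transfers the moment bound to $\mu$. Under Assumption \ref{ap-ell}, Corollary \ref{cor-fel} supplies uniqueness and strict positivity of the transition density, and a standard irreducibility argument (combining the Harnack inequality \eqref{har-log} with the fact that $W_A$ has full Gaussian support) gives $\mu(U)>0$ for every nonempty open $U \subset L^p$. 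Uniqueness plus invariance is classically equivalent to ergodicity, completing the proof.
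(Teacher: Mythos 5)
Your construction of $(\mu_n)$, the two uniform estimates $\sup_n \mu_n(\|\cdot\|_{q+p-2}^{q+p-2})<\infty$ and $\sup_n \mu_n(\|\cdot\|_{\beta,p})<\infty$, the exponent bookkeeping $\alpha=\frac\beta2+\frac{d(p-1)(q-2)}{2p(q+p-2)}$ with \eqref{d} read off as exactly the condition $\alpha<1$, and the tightness conclusion via the compact embedding \eqref{emb} reproduce the paper's proof essentially verbatim; your transfer of the moment bound to $\mu$ by lower semicontinuity under weak convergence is a valid (and arguably cleaner) alternative to the paper's Fubini-plus-invariance argument. The genuine gap is the full-support claim. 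You propose to deduce it from irreducibility, ``combining the Harnack inequality \eqref{har-log} with the fact that $W_A$ has full Gaussian support.'' Neither ingredient is available: the full support of the law of $W_A$ is established nowhere in the paper, and passing from the support of the Gaussian/linear dynamics to irreducibility of the \emph{nonlinear} equation requires a controllability or Girsanov argument that is far from routine for a polynomial, non-Lipschitz drift in an $L^p$ Banach setting --- it cannot be waved through as ``standard.'' Note also that the strict positivity of the density supplied by Corollary \ref{cor-fel} cannot substitute: that density is taken \emph{with respect to} $\mu$, so $P_T\chi_\Gamma(x)=\int_\Gamma p_T(x,y)\,\mu({\rm d}y)$ vanishes on every $\mu$-null set $\Gamma$ no matter how positive $p_T$ is; positivity of the density says nothing about which sets are $\mu$-null.

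The paper closes this step with a short self-contained argument that you should adopt instead. Apply the power-Harnack inequality \eqref{har-pow} with $\bs=2$ and $\phi=\chi_\Gamma$, and integrate in the Harnack variable against $\mu$: since $\mu$ is invariant, this gives $(P_T\chi_\Gamma(x))^2\int_{L^p}\exp\bigl(-\tfrac{2\lambda\|G^{-1}\|_\infty^2}{e^{2\lambda T}-1}\|x-y\|_p^2\bigr)\mu({\rm d}y)\le\mu(\Gamma)$, so $\mu(\Gamma)=0$ forces $P_T\chi_\Gamma(x)=0$ for every $x$ and $T>0$, i.e.\ the transition kernel is absolutely continuous with respect to $\mu$. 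If $\mu$ failed to have full support, there would be a ball $B(x_0,r)$ with $\mu(B(x_0,r))=0$, hence $\pp(\|X_T^{x_0}-x_0\|_p\le r)=0$ for \emph{all} $T>0$; letting $T\downarrow 0$ this contradicts the $\pp$-a.s.\ continuity of $t\mapsto X_t^{x_0}$ in $L^p$ from Lemma \ref{lm-lq}. No irreducibility statement or Gaussian support theorem is needed; in fact irreducibility is a \emph{consequence}, not an ingredient: once $\mu$ has full support, \eqref{har-pow} with $\phi=\chi_\Gamma$ shows $P_T\chi_\Gamma>0$ everywhere, which is exactly the remark the paper makes after Theorem \ref{tm-erg}.
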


\begin{proof}
The uniqueness of the invariant measure and the strong Feller Markov property for $(P_t)$ have been shown in Corollary \ref{cor-fel}.
Thus, to show the existence of an invariant measure, by Krylov--Bogoliubov theorem, it suffices to verify the tightness of the sequence of probability measures $(\mu_n)$ defined by
\begin{align} \label{mun}
\mu_n:=\frac1n \int_0^n \delta_0 P_t {\rm d}t, \quad n \in \nn_+,
\end{align}
where $\delta_0 P_t$ is the distribution of $X_t^0$, the solution of Eq. \eqref{see} with the initial datum $X_0=0$.

It follows from the relation $X=Z+W_A$, the estimate \eqref{Zp} with $x=0$, and Young inequality that
\begin{align} \label{est-tig} 
& \|X^0_t\|_p^p 
\le 2^{p-1} \|Z^0_t\|_p^p+2^{p-1}\|W_A(t)\|_p^p \nonumber  \\
& \le  C \int_0^t (1+\|W_A(r)\|_{q+p-2}^{q+p-2}) {\rm d} r
+2^{p-1} p L_f \int_0^t \|Z^0_r\|_p^p  {\rm d} r  \nonumber \\
& \quad - 2^{p-1} p \theta_1 \int_0^t \|Z^0_r\|_{q+p-2}^{q+p-2} {\rm d} r +2^{p-1} \|W_A(t)\|_p^p \nonumber \\
& \le  C \int_0^t (1+\|W_A(r)\|_{q+p-2}^{q+p-2}) {\rm d} r
- \theta_4 \int_0^t \|Z^0_r\|_{q+p-2}^{q+p-2} {\rm d} r
 +2^{p-1}\|W_A(t)\|_p^p \nonumber \\
 & \le  C \int_0^t (1+\|W_A(r)\|_{q+p-2}^{q+p-2}) {\rm d} r
- \theta_5 \int_0^t \|X^0_r\|_{q+p-2}^{q+p-2} {\rm d} r
 +2^{p-1}\|W_A(t)\|_p^p,   
\end{align}
for some constants $\theta_4, \theta_5>0$, where we have used the elementary inequality $|\xi-\eta|^\br \ge 2^{1-r} \xi^r-\eta^r$ for $\xi, \eta \ge 0$ and $\br \ge 1$, in the last inequality. 
Then we have
\begin{align*} 
& \theta_5 \int_0^t \|X^0_r\|_{q+p-2}^{q+p-2} {\rm d} r
\le C \int_0^t (1+\|W_A(r)\|_{q+p-2}^{q+p-2}) {\rm d} r
+ 2^{p-1} \|W_A(t)\|_p^p.
\end{align*}
The above estimate, in combination with the condition \eqref{wa1}, yields that there exists a constant $C$ such that for all $n \ge 1$, 
\begin{align} \label{est-tig1}
\mu_n(\|\cdot\|_{q+p-2}^{q+p-2})
& =\frac1n \int_0^n \ee \|X^0_r\|_{q+p-2}^{q+p-2} {\rm d} r 
\nonumber \\
& \le \frac C{\theta_5} 
\Big( 1+\frac{\ee \|W_A(n)\|_p^p}{n}+
\frac1n \int_0^n \ee \|W_A(r)\|_{q+p-2}^{q+p-2} {\rm d} r \Big) \nonumber \\
& \le C.
\end{align}

It follows from the ultracontractivity \eqref{ult}, with $\br=p$ and $\bs=\frac{q+p-2}{q-1}$, and Minkovski and Young inequalities that 
\begin{align*} 
& \int_0^n \Big\| \int_0^t S_{t-r} F(X^0_r) {\rm d}r \Big\|_{\beta, p} {\rm d}t   \\
 & \le C \int_0^n \int_0^t e^{-\lambda_1 (t-r)}
  (t-r)^{-\alpha}(1+\|X^0_r\|^{q-1}_{q+p-2}) {\rm d}r {\rm d}t  \\
   & \le C \Big(\int_0^n e^{-\lambda_1 t} t^{-\alpha}{\rm d}t \Big) 
\Big(\int_0^n  (1+\|X^0_t\|^{q-1}_{q+p-2}) {\rm d}t \Big),
\end{align*} 
where $\alpha=\frac\beta 2+\frac{d(p-1)(q-2)}{2p(q+p-2)} \in (0, 1)$ provided that $\beta>0$ is sufficiently small, since $d<\frac{2p(q+p-2)}{(p-1)(q-2)}$.
The fact that  
\begin{align*} 
& \sup_{n \ge 1}\Big(\int_0^n e^{-\lambda_1 t} t^{-\alpha}{\rm d}t \Big) 
= \int_0^\infty e^{-\lambda_1 t} t^{-\alpha}{\rm d}t
<\infty,  
\end{align*}
for all $\lambda_1>0$ and $\alpha \in (0, 1)$, and Young inequality imply that 
\begin{align*} 
& \int_0^n \Big\| \int_0^t S_{t-r} F(X^0_r) {\rm d}r \Big\|_{\beta, p} {\rm d}t
\le C \int_0^n  (1+\|X^0_t\|^{q+p-2}_{q+p-2}) {\rm d}t.
\end{align*}
By Fubini theorem, the estimate \eqref{est-tig1}, and the condition \eqref{wa+}, we arrive at 
\begin{align} \label{est-tig2}
\mu_n(\|\cdot\|_{\beta,p})
& =\frac1n \int_0^n \ee \|X^0_r\|_{\beta,p} {\rm d} r \nonumber \\
& \le \frac1n \ee \int_0^n \Big\| \int_0^t S_{t-r} F(X^0_r) {\rm d}r \Big\|_{\beta, p} {\rm d} r
+\frac1n \int_0^n \ee \|W_A(t)\|_{\beta,p} {\rm d} r \nonumber \\
& \le \frac Cn \int_0^n  (1+ \ee \|X^0_t\|^{q+p-2}_{q+p-2}) {\rm d}t 
+\frac1n \int_0^n \ee \|W_A(t)\|_{\beta,p} {\rm d} r \nonumber \\
& \le C,  
\end{align}
for all $n \ge 1$ and $\beta<(1-\frac{d(p-1)(q-2)}{2p(q+p-2)}) \wedge \beta_0$. 
For any fixed $p \ge 2$, we take $\beta<(1-\frac{d(p-1)(q-2)}{2p(q+p-2)}) \wedge \beta_0 \wedge \frac d p$, so that the embedding $W^{\beta,p} \subset L^p$ in \eqref{emb} is compact.  
Consequently, the above estimate \eqref{est-tig2} shows that 
$\{ u \in L^p: \ \|u\|_{\beta, p} \le N \}$ is relatively compact in $L^p$ for any $N>0$, and thus $(\mu_n)$ is tight.
This shows that an invariant measure, denoted by $\mu$, of $(P_t)$ exists.

To show that the invariant measure $\mu$ has full support on $L^p$, let us choose $\bs=2$, $\phi =\chi_\Gamma$, in \eqref{har-pow}, with $\Gamma$ being a Borel set in $L^p$, and get 
\begin{align*}
& (P_T \chi_\Gamma(x))^2 \int_{L^p} \exp\Big(-\frac{2 \lambda \|G^{-1}\|_\infty^2}{e^{2 \lambda T}-1} \|x-y\|_p^2\Big) \mu({\rm d} y) \\
& \le \int_{L^p} P_T \chi_\Gamma(y) \mu({\rm d} y)
=\int_{L^p} \chi_\Gamma(y) \mu({\rm d} y)=\mu(\Gamma), 
\quad T>0, \ x \in L^p.
\end{align*}
This shows that the transition kernel of $(P_t)$ is absolutely continuous with respect to $\mu$ so that it has a density $p_T(x, y)$. 
Suppose that ${\rm supp} \ \mu\neq L^p$, then there exist $x_0 \in L^p$ and $r>0$ such that $\mu(B(x_0, r))=0$, where $B(x_0, r)$ is a ball in $L^p$ with radius $r$ and center $x_0$.
Then $p_T(x_0, B(x_0, r))=0$ and 
$\pp(\|X_T^{x_0}-x_0\|_p \le r)=0$ for all $T>0$.
This contradicts the fact that $X_T^{x_0}$ is a continuous process on $L^p$ as shown in Lemma \ref{lm-lq}.

Similarly to \eqref{est-tig1}, we have (with $n=1$ and $X_0=x$)
\begin{align*}  
\int_0^1 P_t \|\cdot\|_{q+p-2}^{q+p-2}(x) {\rm d}t
=\int_0^1 \ee \|X^x_t\|_{q+p-2}^{q+p-2} {\rm d} t
\le C (1+\|x\|_p^p).
\end{align*}
Integrating on $L^p$ concerning the invariant measure $\mu$ and using the Fubini theorem, we obtain 
\begin{align*} 
\mu( \|\cdot\|_{q+p-2}^{q+p-2})  
=\int_0^1 \int_{L^p} P_t \|\cdot\|_{q+p-2}^{q+p-2}(x)\mu({\rm d}x) {\rm d}t 
\le C (1+\mu(\|\cdot\|_p^p))<\infty._{L^p}
\end{align*}
This shows that $\mu( \|\cdot\|_{q+p-2}^{q+p-2})<\infty$ and completes the proof.
\end{proof}

\begin{rk} 
In the case $q>2=p$, the condition \eqref{d} is equivalent to $d<4+8/(q-2)$, which will always be valid in $d=1,2,3$-dimensional cases. 
\end{rk}

\begin{rk} \label{rk-erg}
Under Assumptions \ref{ap-f}-\ref{ap-wa}, if $\lambda$ defined in \eqref{lam} is positive, one could expect to use the remote start method in, e.g., \cite[Theorem 6.3.2]{PZ96} to show that $(P_t)$ has an invariant measure $\mu$, once similar estimates as \eqref{con}-\eqref{Zp} are derived, without the restriction \eqref{d}. 
\end{rk}

\begin{rk}
Under the conditions of Theorem \ref{tm-erg} or Remark \ref{rk-erg}, $(P_t)$ has a unique invariant measure $\mu$ with full support on $L^p$, which shows that $(P_t)$ is irreducible, i.e., $P_T\chi_{\Gamma}(x)>0$ for arbitrary non-empty open set $\Gamma \subset L^p$, $x \in L^p$, and $T>0$.
Indeed, the power-Harnack inequality \eqref{har-pow} with $f=\chi_\Gamma$ yields that 
\begin{align} \label{irr}
 (P_T \chi_\Gamma(y))^\bs \le P_T \chi_\Gamma(x)
\exp\Big(\frac{\bs \lambda \|G^{-1}\|_\infty^2 \|x-y\|_p^2}{(\bs-1) (e^{2 \lambda T}-1)}\Big),
\quad y \in L^p.
\end{align}
The facts that $\mu$ is $P_T$-invariant and has full support on $L^p$ imply
\begin{align*}
\int_{L^p} P_T \chi_\Gamma(y) \mu({\rm d}y) 
\le \int_{L^p} \chi_\Gamma(y) \mu({\rm d}y) 
=\mu(\Gamma)>0,   
\end{align*}
which shows that there is a $y \in L^p$ such that $P_T \chi_\Gamma(y)>0$.
Then \eqref{irr} yields that $P_T \chi_\Gamma(x)>0$ for all $x \in L^p$ so that the irreducibility holds.
\end{rk}

\begin{proof}[Proof of Theorem \ref{main2}]
Theorem \ref{main2} follows from Theorem \ref{tm-erg} and Remark \ref{rk-erg}.
\end{proof}

\subsection{Estimates of density}

Finally, we use the Harnack inequalities \eqref{har-log}-\eqref{har-pow} to derive an estimate of the density, denoted by $p_T(x, \cdot)$, concerning the invariant measure $\mu$ of $(P_t)$.

\begin{cor}  \label{cor-den}
Let Assumptions \ref{ap-f}-\ref{ap-wa} hold.
Assume that $q>2$ such that \eqref{d} and Assumption \ref{ap-wa+} hold, or $\lambda$ defined in \eqref{lam} is positive.
Then for all $T>0$, $x \in L^p$, and $\bs >1$,  
\begin{align}\label{den}
& \|p_T (x, \cdot)\|_{L^\bs(\mu)} 
\le \Big( \int_{L^p} \exp\Big(-\frac{\bs \lambda \|G^{-1}\|_\infty^2 \|x-y\|_p^2}{e^{2 \lambda T}-1} \Big) \mu({\rm d}y)  \Big)^{-\frac{\bs-1}\bs}.
\end{align} 
\end{cor}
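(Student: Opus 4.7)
The plan is to deduce the density bound from the power-Harnack inequality \eqref{har-pow}, together with the invariance of $\mu$ and an $L^{\bs}$-$L^{\bs'}$ duality argument; the existence of the invariant measure is supplied by Theorem \ref{tm-erg}/Remark \ref{rk-erg} and the existence of a strictly positive density by the gradient estimate in Corollary \ref{cor-fel}. The first step is to choose the Harnack exponent so that the constant in the exponential matches the statement. Setting $r=\bs/(\bs-1)$, so that $r/(r-1)=\bs$, and swapping the roles of $x$ and $y$ in \eqref{har-pow}, I obtain
\begin{align*}
(P_T \phi(x))^{r}
\le P_T \phi^{r}(y)\,\exp\Big(\frac{\bs\,\lambda\,\|G^{-1}\|_\infty^{2}\,\|x-y\|_p^{2}}{e^{2\lambda T}-1}\Big),\qquad \phi\in\BB_b^+(L^p).
\end{align*}

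Next I would rearrange this into a lower bound for $P_T\phi^{r}(y)$, integrate both sides in $y$ against the invariant measure $\mu$, and invoke invariance $\int P_T\phi^{r}\,{\rm d}\mu=\int \phi^{r}\,{\rm d}\mu$ to get
\begin{align*}
\int \phi^{r}\,{\rm d}\mu
\ge (P_T\phi(x))^{r}\int \exp\Big(-\frac{\bs\,\lambda\,\|G^{-1}\|_\infty^{2}\,\|x-y\|_p^{2}}{e^{2\lambda T}-1}\Big)\,\mu({\rm d}y).
\end{align*}
Denoting the last integral by $K(x)$ and taking $r$-th roots yields $P_T\phi(x)\le K(x)^{-1/r}\,\|\phi\|_{L^{r}(\mu)}$ for every nonnegative $\phi$, and hence (by positivity of the kernel) for every $\phi\in L^{r}(\mu)$.

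The last step is the duality identification $\|p_T(x,\cdot)\|_{L^{\bs}(\mu)}=\sup_{\|\phi\|_{L^{r}(\mu)}\le 1}\int p_T(x,z)\phi(z)\,\mu({\rm d}z)=\sup_{\|\phi\|_{L^{r}(\mu)}\le 1} P_T\phi(x)$, valid because $r=\bs/(\bs-1)$ is the Hölder conjugate of $\bs$. Substituting the previous bound and using $1/r=(\bs-1)/\bs$ gives
\begin{align*}
\|p_T(x,\cdot)\|_{L^{\bs}(\mu)}\le K(x)^{-1/r}=K(x)^{-(\bs-1)/\bs},
\end{align*}
which is exactly \eqref{den}. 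There is no real obstacle here beyond careful bookkeeping of the Hölder conjugate exponent so that the $1/(\bs-1)$ factor in \eqref{har-pow} gets absorbed to produce the clean constant $\bs/(e^{2\lambda T}-1)$ displayed in the corollary; the only substantive ingredients are the power-Harnack inequality \eqref{har-pow} and the $P_T$-invariance of $\mu$.
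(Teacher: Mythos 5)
Your proposal is correct and follows essentially the same route as the paper: apply the power-Harnack inequality \eqref{har-pow} with the conjugate exponent $\bs/(\bs-1)$ (roles of $x$ and $y$ interchanged), integrate against $\mu$ and use its $P_T$-invariance, then conclude by $L^{\bs}$--$L^{\bs/(\bs-1)}$ duality for the density $p_T(x,\cdot)$. In fact your write-up makes explicit the rearrangement and integration step that the paper's displayed computation leaves somewhat compressed, but the ingredients and bookkeeping of exponents are identical.
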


\begin{proof}
Let $T>0$ and $x \in L^p$.
For any $\bs>1$ and $\phi \in \mathcal G:=\{\psi \in B_b^+(L^p):~ \mu(\psi^\frac\bs{\bs-1}) \le 1\}$, it follows from \eqref{har-pow} with $\bs$ replaced by $\frac\bs{\bs-1}$ that 
\begin{align*}
(P_T \phi(x))^\frac\bs{\bs-1} 
\le P_T \phi^\frac\bs{\bs-1}(y)
\exp\Big(\frac{\bs \lambda \|G^{-1}\|_\infty^2 \|x-y\|_p^2}{e^{2 \lambda T}-1}\Big), \quad y \in L^p.  
\end{align*}
Noting that $\mu$ is $P_T$-invariant, we have
\begin{align*}
(P_T \phi(x))^\frac\bs{\bs-1}  \int_{L^p} \exp\Big(-\frac{\bs \lambda \|G^{-1}\|_\infty^2 \|x-y\|_p^2}{e^{2 \lambda T}-1}\Big) \mu({\rm d}y)
\le \mu(P_T \phi^\frac\bs{\bs-1})
=\mu(\phi^\frac\bs{\bs-1})=1,  
\end{align*}
from which we obtain
\begin{align*}
P_T \phi(x)  
\le \Big(\int_{L^p} \exp\Big(-\frac{\bs \lambda \|G^{-1}\|_\infty^2 \|x-y\|_p^2}{e^{2 \lambda T}-1}\Big) \mu({\rm d}y) \Big)^{-\frac{\bs-1}\bs}.
\end{align*}
Combined with the above estimate and the fact that  
\begin{align*}
\|p_T (x, \cdot)\|_{L^\bs(\mu)} 
=\sup_{\phi \in \mathcal G} \{\< p_T (x, \cdot), \phi\>_\mu \} 
=\sup_{\phi \in \mathcal G} \{P_T \phi(x) \}, 
\end{align*} 
we derive the density estimate \eqref{den}.
\end{proof}

\begin{rk}
According to \cite[Theorem 1.4.2 (1) and (2)]{Wan13}, the Harnack inequalities \eqref{har-log}-\eqref{har-pow} are equivalent to the following two heat kernel inequalities, respectively, provided $P_T$ has a strictly positive density $p_T(x, \cdot)$ concerning an invariant measure $\mu$:
\begin{align*}
\int_{L^p} p_T(x, z) \log \frac{p_T(x, z)}{p_T(y, z)} \mu({\rm d}z)
& \le \frac{\lambda \|G^{-1}\|_\infty^2 \|x-y\|_p^2}{e^{2 \lambda T}-1},  \\
\int_{L^p} p_T(x, z) \Big(\frac{p_T(x, z)}{p_T(y, z)}\Big)^\frac1{\bs-1} \mu({\rm d}z)
& \le \exp\Big(\frac{\bs \lambda\|G^{-1}\|_\infty^2 \|x-y\|_p^2}{(\bs-1)^2 (e^{2 \lambda T}-1)}\Big).  
\end{align*}
Under the conditions in Corollary \ref{cor-den}, $(P_t)$ has a unique invariant measure $\mu$ such that $p_T(x, \cdot)$ is strictly positive.
Then, the above two heat kernel inequalities are direct consequences of Theorem \ref{tm-har}.
\end{rk}

\section*{Declarations}

{\bf Conflict of interest}. The authors have no competing interests to declare relevant to this article's content.

\section*{Acknowledgements}

We thank the anonymous referees for their helpful comments and suggestions.
The author is supported by the National Natural Science Foundation of China, No. 12101296, Guangdong Basic and Applied Basic Research Foundation, No. 2024A1515012348, and Shenzhen Basic Research Special Project (Natural Science Foundation) Basic Research (General Project), Nos. JCYJ20220530112814033 and JCYJ20240813094919026.

\bibliographystyle{amsplain}
\bibliography{bib}

\end{document}